\documentclass[a4paper, 11pt,reqno]{amsart}

\usepackage{amsmath,amsthm,amsbsy,amssymb}
\usepackage{arydshln} 
\usepackage{booktabs} 
\usepackage{paralist} 

\makeatletter
\newcommand{\leqnos}{\tagsleft@true\let\veqno\@@leqno}
\newcommand{\reqnos}{\tagsleft@false\let\veqno\@@eqno}
\reqnos
\makeatother

\oddsidemargin 10pt
 \evensidemargin 10pt
 \marginparwidth 0.95in
 \textwidth 6.0in

 \usepackage[backref=page]{hyperref}
\hypersetup{
    colorlinks = true,
linkcolor={red},
urlcolor={blue},
citecolor={green},    
urlcolor = {blue},
citebordercolor = {0.33 .58 0.33},
 linkbordercolor = {0.99 .28 0.23}
}

\usepackage{mathrsfs}

\newcommand{\cS}{\mathcal S}
\newcommand{\cT}{\mathcal T}

\newcommand{\fM}{\mathfrak M}

\usepackage{bm}
\newcommand*{\B}[1]{\ifmmode\bm{#1}\else\textbf{#1}\fi}
\newcommand{\bv}{\B{v}}
\newcommand{\bvi}{\B{v_1}}
\newcommand{\bvii}{\B{v_2}}
\newcommand{\bw}{\B{w}}
\newcommand{\bc}{\B{c}}

\newcommand{\ba}{\B{a}}

\newcommand{\cV}{\mathcal V}
\newcommand{\cW}{\mathcal W}



\newcommand{\RR}{\mathbb{R}}

\newcommand{\sdfrac}[2]{\mbox{\small$\displaystyle\frac{#1}{#2}$}}

\newcommand{\ltfrac}[2]{\mbox{\large$\frac{#1}{#2}$}}



\newcommand{\norm}[1]{\left|\hspace*{2.5pt} \!\!\left| #1\right|\hspace*{2.5pt} \!\!\right|}

\newcommand{\normu}[1]{\pmb{\left\vert\vphantom{#1}\right.}#1\pmb{\left.\vphantom{#1}\right\vert}}

\newcommand{\abs}[1]{\left\vert #1 \right\vert}

\usepackage{xspace}

\DeclareMathOperator{\distance}{\mathfrak{d}} 



\theoremstyle{plain}
\newtheorem{theorem}{Theorem}
\newtheorem{lemma}{Lemma}

\theoremstyle{remark}
\newtheorem{remark}{Remark}
\newtheorem*{remark*}{Remark}

\newtheorem*{example*}{Example} 

\usepackage{subfigure}  
\usepackage{float}
\usepackage[pdftex]{graphicx}
\graphicspath{{IMAGES/}{./}}
\usepackage[]{caption}
\captionsetup{labelfont = sc, textfont = sl,font=footnotesize, labelsep=period, 
margin={7mm,7mm},
 format=plain,oneside}

\usepackage{cite}

\makeatletter
\@namedef{subjclassname@2020}{\textup{2020} Mathematics Subject Classification}
\makeatother

\begin{document}


\title[Counterintuitive patterns on angles and distances]
{Counterintuitive patterns on angles and distances between lattice points in high
dimensional hypercubes}

\author[J. Anderson, C. Cobeli, A. Zaharescu]{Jack Anderson, Cristian Cobeli, 
Alexandru Zaharescu}

\address{
JA: Department of Mathematics,
University of Illinois at Urbana-Champaign,
Altgeld Hall, 1409 W. Green Street,
Urbana, IL, 61801, USA
}
\email{jacka4@illinois.edu}

\address{
CC: Simion Stoilow Institute of Mathematics of the Romanian Academy, 
P. O. Box 1-764, RO-014700 Bucharest, Romania}
\email{cristian.cobeli@gmail.com}

\address{
AZ: Department of Mathematics,
University of Illinois at Urbana-Champaign,
Altgeld Hall, 1409 W. Green Street,
Urbana, IL, 61801, USA and Simion Stoilow Institute of Mathematics of the Romanian Academy, 
P. O. Box 1-764, RO-014700 Bucharest, Romania}
\email{zaharesc@illinois.edu}  

\date{\today}
\subjclass[2020]{11B99; 11K99, 11P21, 51M20, 52Bxx.}
\keywords{Hypercubes, lattice points, Euclidean distance.}

\begin{abstract}
Let $\cS$ be a finite set of integer points in $\RR^d$, which we assume has many symmetries, 
and let $P\in\RR^d$ be a fixed point. 
We calculate the distances from $P$ to the points in $\cS$ and compare the results.
In some of the most common cases, we find that they lead to unexpected conclusions
if the dimension is sufficiently large. 
For example, if $\cS$ is the set of vertices of a hypercube in $\RR^d$ 
and $P$ is any point inside, then almost all triangles $PAB$ with $A,B\in\cS$ are
almost equilateral. Or, if $P$ is close to the center of the cube, then almost all
triangles $PAB$ with $A\in \cS$ and $B$ anywhere in the hypercube are almost right triangles.
\end{abstract}
\maketitle

\section{Introduction}
\noindent
Recent developments in network communications~\cite{LQ2016, SH2010} or artificial intelligence~\cite{BS2021} 
have shed new light on studies of graphs and special models based on sets 
explored in combinatorial geometry
or related to lattice points in multidimensional 
spaces~\cite{AHK2001, ACZ2023, LM2023, Buc1986, ES1996, Hal1982, OO2015}.
Our object in this article is to present a few results related to the fact that
in high dimensional hypercubes, a random pick of lattice points to find some 
that are at an `exceptional distance' apart from each other has zero chance of 
success if the dimension goes to infinity. (Here, an \emph{exceptional distance} 
is any one that is different from the average.)

Let $\cS\subset\RR^d$, $d\ge 1$, be a finite set and let $\ba=(a_1,\dots,a_d)\in\RR^d$.
If we look from a distant point $\ba$ to the points in $\cS$, 
we find that they are all at about the same distance, which is 
the closer to a certain value the farther away from $\cS$ our point of view is.
On the contrary, if our point of view is close to $\cS$, even in $\cS$ or in its convex envelope,
we see the variety of distances ranging from zero to the diameter of $\cS$.
But what we observe is largely influenced by the size of the space and its dimensions.
Our goal here is to highlight some counterintuitive phenomena, some of them somehow 
related to the ones that have been studied for the set of lattice points visible from each other 
in a hypercube~\cite{ACZ2023}. 
In order to illustrate two of these phenomena, let us note that if we randomly pick triangles 
with vertices at the lattice points of a cube that is large enough, the likelihood of 
encountering a significant number of some special triangles is low. 
\setlength{\belowcaptionskip}{-3pt}
\begin{figure}[ht]
 \centering
\includegraphics[width=\textwidth]{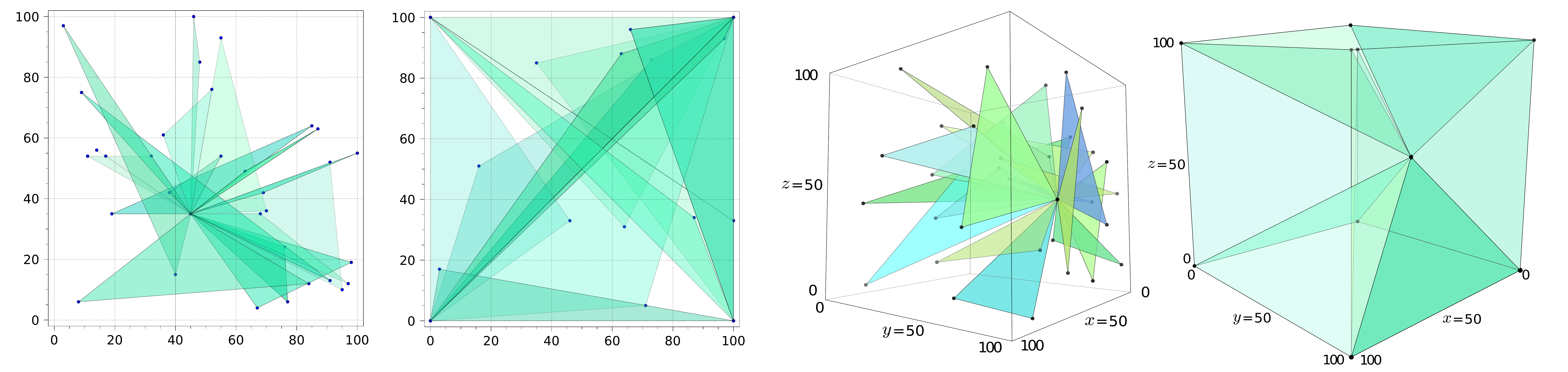}
 \vspace{-6mm}
\caption{Random triangles, in 2D and 3D,
with vertices of integer coordinates in $[0,100]$.
In each image, the triangles are chosen in such a way that they meet one of the following conditions:
A. All triangles have a common vertex, initially chosen randomly but later fixed.
B. All triangles have two vertices randomly chosen from the vertices of the cube, while the third vertex is free.
}
 \label{Fig2d3d}
 \end{figure}
 
For instance, in Figure~\ref{Fig2d3d}, we can see two type of selections, 
each with a distinct feature in dimensions $2$ and $3$.
The first type of choice conditions the triangles to have a common vertex, while the second one 
requires that two of the triangle's vertices be chosen randomly from the cube's vertices, 
while the third one remains unrestricted.
Then we can wonder, what are the odds of getting similar triangles in the first case or non-degenerate
isosceles triangles in the second case?
Certainly, the questions may appear uninteresting, as the answer is so small in both situations.

Furthermore, as the size of the cube and the dimension increases, 
the variety of these triangles increases immensely, and the attempt to randomly find the special 
ones seems completely in vain.
Despite this, the situation is not like that at all, but rather the complete opposite.
Thus, Theorem~\ref{ThAVVSimple} shows that, if the dimension of the hypercube becomes large enough,
then almost all triangles that have two vertices at the corners of the hypercube 
and the third a lattice point inside are almost isosceles.
And on the same note, if both the size of the hypercube and the dimension become sufficiently large, 
then Theorem~\ref{Theorem3} shows that almost all triangles with vertices anywhere 
on the lattice points of the hypercube, which have a certain common vertex, not only are nearly isosceles 
but also have a particular shape, being almost all almost similar.

To make things precise, let $N\ge 1$ be integer and let  $\cW=\cW(d,N)$ be the maximal hypercube of lattice points from $[0,N]^d$. 
Since we are interested both in the discrete case and in the limit process, 
a good coverage of the phenomenon is taken if we choose $\cS\subseteq \cW$. 
We measure the distance between points 
$\bv',\bv''\in\RR^d$
with the Euclidean distance
\begin{equation*}
    \distance(\bv',\bv'')=\big((v_1''-v_1')^2+\cdots+(v_d''-v_d')^2\big)^{1/2}
\end{equation*}
and, to compare with each other sizes from different dimensions, we use
the \emph{normalized distance}:
\begin{equation*}
    \distance_d(\bv',\bv'')=\frac{1}{\sqrt{d}N}\big((v_1''-v_1')^2+\cdots+(v_d''-v_d')^2\big)^{1/2}.
\end{equation*}
Then the normalized distance between two opposite vertices, the farthest
away points in~$\cW$, is $\distance_d\big((0,\dots,0),(N,\dots,N)\big)=1$.

In direct contrast, besides `\emph{the thickest}' hyperlane' $\cW$, we also consider 
'\emph{the thinnest}' one, that of dimension zero, the set of vertices of $[0,N]^d$, 
which we denote by $\cV=\cV(d,N)$.
For orientation in $\cW$ or around, a useful support from some point~$\ba$ turns out to be 
the distance from $\ba$ to the center of the cube,  $\bc$. That is why we denote
$r_{\ba}:=\distance_d(\ba,\bc)$.

From an arbitrary point $\ba$, in Sections~\ref{SectionAMV} and~\ref{SectionAMW}, we
find exact formulas for the average distances to points in $\cV$ or in $\cW$, respectively. 
Also, we calculate the second moments about these averages in both cases. They are the main tool
that allow us to draw catching properties that most pairs or triples of points in the hypercube have
in high dimensions.
We mention that similar procedures were used recently in other different settings.
For example, in a continuum case, in order to provide a framework 
for studying multifractal geometry, the authors of~\cite{AEHO2017} and~\cite{OR2019} 
study the average distance and the asymptotic behavior of higher moments of self-similar 
measures on self-similar subsets of $\RR$, and on graph-directed self-similar subsets of~$\RR$.
Corresponding characteristic properties of lattice points that are visible from each others were 
observed in~\cite{ACZ2023}.
Averages of relative distances from points in geometric figures were also the object of study in
the articles~\cite{MMP1999, LQ2016, Dun1997, BP2009, Bas2021}.

To exemplify our results, regarding, for example, to the vertices of the hypercube, 
one may ask what is the expected distance from them to a fixed arbitrary point $\ba$ 
and what is the probability that such a distance is close to the average.
In Section~\ref{SectionAVAV}, we show that, for any fixed point $\ba\in\cW$,
almost all vertices are at a normalized distance from $\ba$ that is close to
$\sqrt{1/4+r_{\ba}^2}$,
so long as the dimension $d$ is sufficiently large.
As a consequence, it follows that almost all triangles formed from~$\ba$ and two vertices of the hypercube
will be nearly an isosceles triangle, since the distances from~$\ba$ to each of the 
two vertices will both be close to the same value.
\medskip
\begin{theorem}\label{ThAVVSimple}
    For all $\varepsilon>0$, there exists an integer $d_\varepsilon$ such that, for all integers $d\geq d_\varepsilon$,
    $N\geq 1$, and any point $\ba\in\cW$, the proportion of triangles $(\ba,\bvi,\bvii)$
    such that
    \begin{equation*}
        \abs{\distance_d(\ba,\bvi)-\distance_d(\ba,\bvii)}\leq\varepsilon,
    \end{equation*}
    where $\bvi,\bvii\in\cV$, is greater than or equal to $1-\varepsilon$.
\end{theorem}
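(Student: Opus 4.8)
The plan is to deduce Theorem~\ref{ThAVVSimple} from the single--vertex concentration established in Section~\ref{SectionAVAV}, packaged through a union bound. The point is that, uniformly in $N\ge1$ and $\ba\in\cW$, the overwhelming majority of vertices of $\cV$ lie at normalized distance from $\ba$ very close to $\mu_{\ba}:=\sqrt{1/4+r_{\ba}^2}$; once this is known, whenever $\bvi$ and $\bvii$ both belong to this majority the triangle $(\ba,\bvi,\bvii)$ is automatically near-isosceles in the required sense, and a union bound controls the exceptional pairs.

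For the concentration step, fix $\ba=(a_1,\dots,a_d)\in\cW$ and let $\bv=(v_1,\dots,v_d)$ be uniform on $\cV$, so that the $v_i$ are independent with $v_i\in\{0,N\}$ equally likely. Then $\distance(\ba,\bv)^2=\sum_{i=1}^d(v_i-a_i)^2$ is a sum of independent terms, the $i$-th taking the values $a_i^2$ and $(N-a_i)^2$ with equal probability, and a short computation --- essentially the average and second--moment formulas of Sections~\ref{SectionAMV}--\ref{SectionAVAV}, which one may simply quote --- gives
\[
\EE\bigl[\distance_d(\ba,\bv)^2\bigr]=\tfrac14+r_{\ba}^2=\mu_{\ba}^2,\qquad
\mathrm{Var}\bigl(\distance_d(\ba,\bv)^2\bigr)=\frac{r_{\ba}^2}{d}\le\frac{1}{4d},
\]
the last bound because $r_{\ba}\le\tfrac12$ for every $\ba\in\cW$.

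Now Chebyshev's inequality gives that for every $t>0$ at most a fraction $1/(4dt^2)$ of the vertices satisfy $\bigl|\distance_d(\ba,\bv)^2-\mu_{\ba}^2\bigr|>t$. Since $\distance_d(\ba,\bv)\ge0$ and $\mu_{\ba}\ge\tfrac12$, the event $\bigl|\distance_d(\ba,\bv)-\mu_{\ba}\bigr|>\eta$ forces $\bigl|\distance_d(\ba,\bv)^2-\mu_{\ba}^2\bigr|>\eta/2$, so the proportion of $\bv\in\cV$ with $\bigl|\distance_d(\ba,\bv)-\mu_{\ba}\bigr|>\eta$ is at most $1/(d\eta^2)$. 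Call a vertex \emph{good} for $\ba$ when $\bigl|\distance_d(\ba,\bv)-\mu_{\ba}\bigr|\le\varepsilon/2$; taking $\eta=\varepsilon/2$ and $d_\varepsilon=\lceil 8/\varepsilon^3\rceil$, for every $d\ge d_\varepsilon$, every $N\ge1$ and every $\ba\in\cW$ at least a fraction $1-\varepsilon/2$ of $\cV$ is good. If $\bvi,\bvii\in\cV$ are both good, then by the triangle inequality
\[
\abs{\distance_d(\ba,\bvi)-\distance_d(\ba,\bvii)}\le\abs{\distance_d(\ba,\bvi)-\mu_{\ba}}+\abs{\mu_{\ba}-\distance_d(\ba,\bvii)}\le\varepsilon,
\]
and a pair $(\bvi,\bvii)\in\cV\times\cV$ can violate the conclusion only if at least one of its entries is not good, which by the union bound happens for at most a fraction $2\cdot(\varepsilon/2)=\varepsilon$ of all pairs (the same holds for unordered pairs after a harmless enlargement of $d_\varepsilon$). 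Hence the proportion of triangles $(\ba,\bvi,\bvii)$ satisfying the stated inequality is at least $1-\varepsilon$.

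I do not expect a genuine obstacle: the whole content lies in the second--moment estimate, which is elementary and already available from the earlier sections, while the rest is a union bound. The only points that need a little care are the uniformity in $N$ and in $\ba\in\cW$ --- which holds because the normalization built into $\distance_d$ kills the $N$--dependence and $r_{\ba}\le\tfrac12$ throughout the cube --- and the passage from concentration of $\distance_d(\ba,\bv)^2$ to concentration of $\distance_d(\ba,\bv)$, where one uses that $\mu_{\ba}\ge\tfrac12$ keeps us on a range where $x\mapsto\sqrt x$ is Lipschitz.
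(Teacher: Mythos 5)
Your proposal is correct and follows essentially the same route as the paper: a Chebyshev bound using the second moment of $\distance_d^2(\ba,\bv)$ about its mean $B_{\ba,\cV}=\tfrac14+r_{\ba}^2$ (Lemmas~\ref{LemmaAverageV} and~\ref{LemmaM2V}), the lower bound $\sqrt{B_{\ba,\cV}}\ge\tfrac12$ to pass from concentration of squared distances to distances, and then a pair-counting (union/product) bound, which is exactly how Theorems~\ref{TheoremV1} and~\ref{ThVIsosceles} are proved. The only cosmetic difference is that the paper keeps explicit rates $d^{-\eta}$ and subtracts the degenerate pairs from $\cV^2$ explicitly, while you absorb these negligible corrections into a slightly larger $d_\varepsilon$, which is harmless.
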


Another consequence arises from noticing that, for any vertex $\bv\in\cV$, the square of the normalized distance from the center
of the cube to $\bv$ is $1/4$. As a result, for almost all vertices $\bv$, the square of the distance from $\ba$ to $\bv$ is almost
the sum of the squares of the distances from $\bc$ to $\ba$ and from $\bc$ to $\bv$. Therefore, it is natural to ponder if
$(\ba,\bc,\bv)$ may be close to a right triangle, and in fact this is the case so long as $\ba$ is not too near to $\bc$. 

\begin{theorem}\label{ThACVSimple}
    For all $\varepsilon>0$, there exists an integer $d_\varepsilon$, and a function $f(d)\leq1/2$, such that for all
    integers $d\geq d_\varepsilon$, $N\geq1$, and any point $\ba\in\cW$ with $\distance_d(\ba,\bc)\geq f(d)$ (where
    $\bc$ is the center of the hypercube), the proportion of triangles $(\ba,\bc,\bv)$ 
    with $\bv\in\cV$ and whose angle~$\theta_{\bc}(\bv)$ at $\bc$ satisfies
    \begin{equation*}
        \abs{\cos\theta_{\bc}(\bv)} \leq \varepsilon,
    \end{equation*} 
    is greater than or equal to $1-\varepsilon$.
\end{theorem}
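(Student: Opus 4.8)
The plan is to reduce the statement to a second-moment (Chebyshev) estimate for a random sum of signed coordinates. Fix $\ba\in\cW$ and set $u_i:=a_i-N/2$, so that $\sum_{i=1}^d u_i^2=\distance(\ba,\bc)^2=dN^2r_{\ba}^2$. A uniformly random vertex $\bv\in\cV$ can be written $\bv=\bc+\tfrac N2(\eta_1,\dots,\eta_d)$, where $\eta_1,\dots,\eta_d$ are independent signs taking the values $\pm1$ with probability $1/2$; the $2^d$ sign vectors $(\eta_1,\dots,\eta_d)$ are in bijection with $\cV$, and $\distance(\bc,\bv)=N\sqrt d/2$ for each of them. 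Expanding the inner product then gives
\begin{equation*}
\cos\theta_{\bc}(\bv)=\frac{\langle\ba-\bc,\ \bv-\bc\rangle}{\distance(\ba,\bc)\,\distance(\bc,\bv)}=\frac{1}{dN r_{\ba}}\sum_{i=1}^d u_i\eta_i ,
\end{equation*}
which is well defined precisely because $r_{\ba}\ge f(d)>0$ forces $\ba\ne\bc$.

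Next I would treat $X:=\sum_{i=1}^d u_i\eta_i$ as a random variable over the uniform choice of $\bv$. Independence together with $\EE\eta_i=0$ and $\EE\eta_i^2=1$ gives $\EE X=0$ and $\EE X^2=\sum_{i=1}^d u_i^2=dN^2r_{\ba}^2$. Since the event $\abs{\cos\theta_{\bc}(\bv)}>\varepsilon$ is exactly $\abs{X}>\varepsilon dN r_{\ba}$, Chebyshev's inequality yields
\begin{equation*}
\PP\bigl(\abs{\cos\theta_{\bc}(\bv)}>\varepsilon\bigr)\le\frac{\EE X^2}{\varepsilon^2 d^2 N^2 r_{\ba}^2}=\frac{1}{\varepsilon^2 d},
\end{equation*}
a bound that is, conveniently, independent of $\ba$ and $N$. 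Taking $d_\varepsilon:=\lceil\varepsilon^{-3}\rceil$ makes $1/(\varepsilon^2 d)\le\varepsilon$ for all $d\ge d_\varepsilon$, so the proportion of $\bv\in\cV$ with $\abs{\cos\theta_{\bc}(\bv)}\le\varepsilon$ is at least $1-\varepsilon$, which is the claim. As this argument uses nothing about $r_{\ba}$ beyond $r_{\ba}>0$, I would take $f$ to be any fixed function with $0<f(d)\le1/2$ (the ceiling $1/2$ only ensuring that $\cW$ still contains points with $r_{\ba}\ge f(d)$, e.g.\ its vertices), and one may even let $f(d)\to0$.

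I do not expect a real obstacle on this route: the single point requiring care is the degenerate possibility $\ba=\bc$ (which can occur when $N$ is even), and that is removed by the hypothesis $r_{\ba}\ge f(d)>0$. If one prefers to argue in the spirit of the remark preceding the theorem, the alternative is to invoke the estimate of Section~\ref{SectionAVAV} — that $\distance_d(\ba,\bv)^2$ is close to $r_{\ba}^2+\tfrac14$ for almost all $\bv\in\cV$ — together with the law of cosines $\distance_d(\ba,\bv)^2=r_{\ba}^2+\tfrac14-r_{\ba}\cos\theta_{\bc}(\bv)$, which gives $\abs{\cos\theta_{\bc}(\bv)}\le(\text{error in }\distance_d(\ba,\bv)^2)/r_{\ba}$. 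In that version a quantitative lower bound $r_{\ba}\ge f(d)$ is genuinely needed to absorb the division by $r_{\ba}$, and the only delicate step is to let $f(d)$ tend to $0$ slowly enough that the accuracy demanded from the Section~\ref{SectionAVAV} estimate is still attained once $d\ge d_\varepsilon$; keeping $f$ constant sidesteps this bookkeeping entirely.
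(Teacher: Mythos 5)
Your argument is correct, and it is genuinely different from the route the paper takes. The paper proves this statement (as Theorem~\ref{ThTriangleacv}) by recycling Theorem~\ref{TheoremV1}: almost all $\bv\in\cV$ satisfy $\distance_d(\ba,\bv)=\sqrt{1/4+r_{\ba}^2}+E(d^{-\eta})$ by the Chebyshev-type bound on $\distance_d^2(\ba,\bv)$, and then the law of cosines at $\bc$ converts this into $\abs{\cos\theta_{\bc}(\bv)}\le(\sqrt2+1)d^{-\eta}r_{\ba}^{-1}$; the division by $r_{\ba}$ is exactly why the paper must impose the quantitative lower bound $r_{\ba}\ge d^{\gamma-\eta}$, i.e.\ $f(d)=d^{\gamma-\eta}\to0$, and the exceptional proportion is $d^{-(1-2\eta)}$. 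You instead apply the second-moment bound directly to the quantity of interest: writing $\bv-\bc=\tfrac N2(\eta_1,\dots,\eta_d)$ with independent uniform signs, you get
\begin{equation*}
\cos\theta_{\bc}(\bv)=\frac{\sum_i u_i\eta_i}{dNr_{\ba}},\qquad \EE\Big[\textstyle\sum_i u_i\eta_i\Big]=0,\qquad \EE\Big[\big(\textstyle\sum_i u_i\eta_i\big)^2\Big]=dN^2r_{\ba}^2,
\end{equation*}
so $\cos\theta_{\bc}(\bv)$ has variance exactly $1/d$ and Chebyshev gives an exceptional proportion $\le 1/(\varepsilon^2 d)$ \emph{uniformly} in $\ba\ne\bc$ and $N$; the $r_{\ba}$-dependence cancels. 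Your computations check out ($\distance(\bc,\bv)=N\sqrt d/2$, $\|\ba-\bc\|=\sqrt d\,N r_{\ba}$, $d_\varepsilon=\lceil\varepsilon^{-3}\rceil$), and degenerate or near-collinear configurations are simply absorbed into the exceptional set, so no separate treatment is needed. What each approach buys: the paper's derivation is a two-line corollary of machinery it has already built (Theorem~\ref{TheoremV1} and the Pythagorean identity \eqref{eqPythagoras}) and fits its narrative that distances concentrate at $\sqrt{1/4+r_{\ba}^2}$, but it genuinely needs $f(d)>0$ tending to zero no faster than a power of $d$; your direct argument is more elementary, gives an explicit and $r_{\ba}$-free rate $1/(\varepsilon^2d)$, and shows the hypothesis $\distance_d(\ba,\bc)\ge f(d)$ is only needed to exclude $\ba=\bc$, which is a strictly stronger conclusion than the one the paper establishes. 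Your closing remark correctly identifies the paper's law-of-cosines route as the alternative and pinpoints the one place where the lower bound on $r_{\ba}$ is truly used there.
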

\noindent
Precise estimates and the effective bounds versions of Theorems~\ref{ThAVVSimple} 
and~\ref{ThACVSimple} are proved in Section~\ref{SectionVTriangles}.

In the second part of our manuscript, starting with Section~\ref{SectionAMW}, we turn our focus to looking at distances from a fixed point $\ba$ to any integer point $\bw$ in the cube.
We similarly find that almost all points $\bw\in\cW$ are at a normalized distance from $\ba$ 
which is close to $\sqrt{1/12+1/(6N)+\distance_d^2(\ba,\bc)}$, 
provided that the dimension~$d$ is sufficiently large. 
Furthermore, we will also show that almost all pairs of points in the cube are at a relative distance
close to $\sqrt{1/6+1/(3N)}$. As a consequence, we find that almost all triangles with 
one vertex at $\ba$ and the other two anywhere in $\cW$ are nearly identical. 
We summarise this fact in the following theorem, which, in explicit and effective 
form, we prove in Section~\ref{SectionSpacingsW}.
\begin{theorem}\label{Theorem3}
For any $\varepsilon>0$, there exist positive integers $d_\varepsilon$, $N_\varepsilon$, 
such that, for all integers $d\geq d_\varepsilon$, $N\geq N_\varepsilon$,
    and any point $\ba\in\cW$, the proportion of triangles $(\ba,\bw_1,\bw_2)$, with
    $\bw_1,\bw_2\in \cW$, in which
    \begin{equation*}
        \abs{\distance_d(\bw_1,\bw_2)-\sdfrac{1}{\sqrt{6}}} \le \varepsilon, \text{ and }   
    \abs{\distance_d(\ba,\bw_j)-\sqrt{\sdfrac{1}{12}+r_{\ba}}} 
        \le \varepsilon, \text{ for $j=1,2$ }
    \end{equation*}
is greater than or equal to $1-\varepsilon$.
(Here, 
$r_{\ba}=\distance_d(\ba,\bc)$ denotes the normalized distance from~$\ba$ 
to the center of $\cW$.)
\end{theorem}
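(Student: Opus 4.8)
The plan is to deduce Theorem~\ref{Theorem3} from the mean and second-moment formulas established in Section~\ref{SectionAMW}, via the second-moment method. Fix $\ba\in\cW$ and choose $\bw_1,\bw_2$ independently and uniformly at random in $\cW$; then the asserted lower bound on the proportion of triangles $(\ba,\bw_1,\bw_2)$ is exactly a lower bound on the probability that the displayed inequalities hold simultaneously. The structural point is that each of
\[
\distance_d(\ba,\bw)^2=\frac1d\sum_{i=1}^d\frac{(w_i-a_i)^2}{N^2},
\qquad
\distance_d(\bw_1,\bw_2)^2=\frac1d\sum_{i=1}^d\frac{(w_{1,i}-w_{2,i})^2}{N^2}
\]
is an average of $d$ \emph{independent} random variables taking values in $[0,1]$ (here we use $\ba\in\cW$, so $0\le a_i\le N$); hence, whatever their means, their variances are at most $\tfrac1{4d}$, a bound uniform in $\ba$ and in $N$. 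The means are supplied by Section~\ref{SectionAMW}:
\[
\EE\bigl[\distance_d(\ba,\bw)^2\bigr]=\tfrac1{12}+\tfrac1{6N}+r_{\ba}^2,
\qquad
\EE\bigl[\distance_d(\bw_1,\bw_2)^2\bigr]=\tfrac16+\tfrac1{3N}.
\]

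First I would fix a small parameter $\delta=\delta(\varepsilon)>0$ and apply Chebyshev's inequality to the three squared lengths $\distance_d(\ba,\bw_1)^2$, $\distance_d(\ba,\bw_2)^2$, $\distance_d(\bw_1,\bw_2)^2$: each deviates from its mean by more than $\delta$ with probability at most $\tfrac1{4d\delta^2}$, so by a union bound the proportion of triangles that fail at least one of these concentration statements is at most $\tfrac{3}{4d\delta^2}$. Choosing $d_\varepsilon$ with $\tfrac{3}{4d_\varepsilon\delta^2}\le\varepsilon$ removes the dependence on $d$.

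Next I would pass from squared lengths to lengths. In the good event each squared length $X^2$ lies in $[\mu-\delta,\mu+\delta]$ with $\mu$ its mean, whence $\abs{X-\sqrt\mu}=\abs{X^2-\mu}/(X+\sqrt\mu)\le\delta/\sqrt\mu\le c\delta$ for an absolute constant $c$, because $\mu\ge\tfrac1{12}$ in all three cases. The residual $1/N$-terms are absorbed using $\sqrt{a+b}\le\sqrt a+\sqrt b$: one has $\sqrt{\tfrac1{12}+\tfrac1{6N}+r_{\ba}^2}-\sqrt{\tfrac1{12}+r_{\ba}^2}\le(6N)^{-1/2}$ and $\sqrt{\tfrac16+\tfrac1{3N}}-\tfrac1{\sqrt6}\le(3N)^{-1/2}$, uniformly in $r_{\ba}\ge0$. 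Hence, taking $\delta$ small enough that $c\delta\le\varepsilon/2$ and $N_\varepsilon$ large enough that $(3N_\varepsilon)^{-1/2}\le\varepsilon/2$, for all $d\ge d_\varepsilon$, $N\ge N_\varepsilon$ and $\ba\in\cW$, outside a set of triangles of proportion at most $\varepsilon$ one has $\abs{\distance_d(\ba,\bw_j)-\sqrt{\tfrac1{12}+r_{\ba}^2}}\le\varepsilon$ for $j=1,2$ and $\abs{\distance_d(\bw_1,\bw_2)-\tfrac1{\sqrt6}}\le\varepsilon$, which is the assertion.

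The points that require care --- and which the explicit, effective version in Section~\ref{SectionSpacingsW} must make quantitative --- are: (i) the variance bound $\tfrac1{4d}$, hence the threshold $d_\varepsilon$, is genuinely uniform over \emph{all} base points $\ba\in\cW$ and all $N$, which is why one settles for the crude per-coordinate bound $(w_i-a_i)^2\le N^2$ rather than optimizing it; and (ii) the square-root conversion stays controlled as $r_{\ba}\to0$, which is automatic here because the three squared lengths have means bounded below by the absolute constant $\tfrac1{12}$. Everything else is bookkeeping --- solving $\tfrac{3}{4d\delta^2}\le\varepsilon$ and $(3N)^{-1/2}\le\varepsilon/2$ with $\delta\asymp\varepsilon$ to produce explicit values of $d_\varepsilon$ and $N_\varepsilon$.
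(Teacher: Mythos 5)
Your argument is correct (granting the evident typo in the statement: $\sqrt{1/12+r_{\ba}}$ should be $\sqrt{1/12+r_{\ba}^{2}}$, as in the introduction and Section~\ref{SectionSpacingsW}, and that is the version you prove), but it follows a partly different route from the paper. The paper works with the exact mean and second-moment formulas for $\distance^2(\ba,\bv)$, $\bv\in\cW$ (Lemmas~\ref{LemmaAW} and~\ref{LemmaM2W}), turns them into the effective concentration statement of Theorem~\ref{TheoremW1} with constant $51/15$, and---crucially---treats the side $\distance_d(\bw_1,\bw_2)$ by a conditioning argument: almost all $\bw_1$ have $\distance_d^2(\bc,\bw_1)$ near $1/12+1/(6N)$, hence $B_{\bw_1,\cW}$ near $1/6+1/(3N)$, and Theorem~\ref{TheoremW1} is then applied a second time with $\ba=\bw_1$; the counts are combined as in Theorem~\ref{ThVSimilarTriangles} to give the effective Theorem~\ref{ThWSimilarTriangles}, and letting $N$ be large removes the $1/N$ corrections. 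You bypass the exact second-moment formula entirely: writing each normalized squared length as an average of $d$ independent $[0,1]$-valued coordinate contributions gives the uniform variance bound $1/(4d)$, and in particular it bounds the variance of $\distance_d^2(\bw_1,\bw_2)$ \emph{directly}, a quantity the paper never computes but reaches through the two-step conditioning. Your route is shorter and manifestly uniform in $\ba$ and $N$; the paper's route yields the explicit constants ($51/15$, $102/5$) and the rate $d^{-(1-2\eta)}$ that its effective theorems advertise, and reuses formulas needed elsewhere in the paper. Two pieces of bookkeeping you should still supply: the exact means you quote require a one-line verification (the first follows from Lemma~\ref{LemmaAW} as rewritten at the start of Section~\ref{SectionSpacingsW}; the second from the per-coordinate computation $\EE\,(w_{1,i}-w_{2,i})^2=(N^2+2N)/6$), and the theorem concerns genuine (non-degenerate) triangles, so the at most $3(N+1)^d$ degenerate pairs $(\bw_1,\bw_2)$ out of $(N+1)^{2d}$ must be discarded, as the paper does in the passage analogous to \eqref{eqv1v2}; both adjustments are negligible and do not affect your conclusion.
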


For a probabilistic description of some natural expectations in high 
dimensional hypercubes we refer the reader to~\cite[Section 8]{ACZ2023}. 
It is a super-fast approach to the subject, although, there, the discussion is done in a 
continuum and the positions of both the observer and the viewed point are variable, while in this
paper, most of the time the observer has a fixed position.

\section{Distances between any fixed point and the vertices of the hypercube 
}\label{SectionAMV}
\setcounter{subsection}{0} 

For any $\ba=(a_1,\dots,a_d)\in\cW$, in the following we denote $\norm{\ba}^2:=a_1^2+\cdots+a_d^2$ and
$\normu{\ba}:=a_1+\cdots+a_d$.
Let $\cV$  denote the set of all vertices of $[0,N]^d$. 
This cube has $2^d$ vertices and each of them has components equal to $0$
or $N$. Notice that if~$\cV$ is seen as a subset of the set of lattice points $\cW$, then
no two vertices in $\cV$ are visible from each other, since there are always other points 
of integer coordinates in $[0,N]^d$ that interfere between them provided that $N\ge 2$.
The set of points in~$\cW$ that are visible from each other was the object of
study in~\cite{ACZ2023}.

\subsection{\texorpdfstring{The average $A_{\ba,\cV}(d,N)$}{The average A(a,V; d,N}}\label{SubsectionAV}
Let $\ba=(a_1,\dots,a_d)\in\RR^d$ be fixed and let $A_{\ba,\cV}(d,N)$ 
denote the average of the squares of the distances from~$\ba$ to all vertices $\bv\in\cV$. We have
\begin{equation*}
    \begin{split}
    A_{\ba,\cV}(d,N) &= \frac{1}{\#\cV}\sum_{\bv\in\cV}\distance^2(\bv,\ba)\\
    &=\frac{1}{2^d}\sum_{\bv\in\cV}
    \left((v_1-a_1)^2+\cdots+(v_d-a_d)^2\right)\\
    &=\frac{1}{2^d}\sum_{\bv\in\cV}\sum_{j=1}^{d}v_j^2
    -\frac{1}{2^{d-1}}\sum_{\bv\in\cV}\sum_{j=1}^{d}v_j a_j
    +\frac{1}{2^d}\sum_{\bv\in\cV}\sum_{j=1}^{d}a_j^2.
    \end{split}
\end{equation*}
For any fixed $j$, there are $2^{d-1}$ vertices $\bv\in\cV$ with
the $j$-th component equal to $0$, while the remaining ones have the
$j$-th component equal to $N$. Then
\begin{equation*}
    \begin{split}
    A_{\ba,\cV}(d,N) &=\frac{1}{2^d}\sum_{j=1}^{d}2^{d-1}N^2
    -\frac{1}{2^{d-1}}\sum_{j=1}^{d}a_j2^{d-1}N
    +\frac{1}{2^d}\norm{\ba}^2 2^d\\
    &=\frac{1}{2}dN^2-\normu{\ba}N+\norm{\ba}^2.
    \end{split}
\end{equation*}
We state the result in the next lemma.
\begin{lemma}\label{LemmaAverageV}
Let $\cV$ be the set of vertices of the hypercube $[0,N]^d$, where 
$N\ge 1$ and $d\ge 1$ are integers. Let $\ba=(a_1,\dots,a_d)\in\RR^d$ be 
fixed.
Then, the average of all the squares of distances from $\ba$ to points in $\cV$ is
\begin{equation}\label{eqApV}
    A_{\ba,\cV}(d,N) =\frac{1}{2^d}\sum_{j=1}^{d}2^{d-1}N^2
    = \frac{1}{2}dN^2-\normu{\ba}N+\norm{\ba}^2.
\end{equation}
\end{lemma}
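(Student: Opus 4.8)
The plan is to expand the squared Euclidean distance coordinate by coordinate and then exploit the product structure $\cV=\{0,N\}^d$ together with linearity of the sum. Writing $\distance^2(\bv,\ba)=\sum_{j=1}^{d}(v_j-a_j)^2=\sum_{j=1}^{d}v_j^2-2\sum_{j=1}^{d}a_jv_j+\sum_{j=1}^{d}a_j^2$ and summing over all $2^d$ vertices $\bv\in\cV$, the average $A_{\ba,\cV}(d,N)$ splits into three pieces, so that the computation reduces to evaluating, for each fixed index $j$, the three quantities $\sum_{\bv\in\cV}v_j^2$, $\sum_{\bv\in\cV}v_j$, and $\sum_{\bv\in\cV}1$.

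The only combinatorial input needed is that, for each fixed $j\in\{1,\dots,d\}$, exactly $2^{d-1}$ vertices of $\cV$ have $j$-th coordinate equal to $0$ and the other $2^{d-1}$ have it equal to $N$, because the remaining $d-1$ coordinates vary independently over $\{0,N\}$. Hence $\sum_{\bv\in\cV}v_j^2=2^{d-1}N^2$ and $\sum_{\bv\in\cV}v_j=2^{d-1}N$, each independent of $j$, while trivially $\sum_{\bv\in\cV}1=2^d$. Substituting these into the three pieces and summing over $j=1,\dots,d$ gives $\sum_{\bv\in\cV}\distance^2(\bv,\ba)=d\,2^{d-1}N^2-2\cdot 2^{d-1}N\normu{\ba}+2^d\norm{\ba}^2$, and dividing by $\#\cV=2^d$ yields $A_{\ba,\cV}(d,N)=\tfrac12 dN^2-\normu{\ba}N+\norm{\ba}^2$, which is \eqref{eqApV}.

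I do not anticipate any genuine obstacle: the argument is a short exact computation, and the only point requiring care is the bookkeeping of which vertices contribute to each coordinate-wise sum. An equivalent and perhaps cleaner packaging is probabilistic: if $\bv$ is a uniformly random vertex of $[0,N]^d$, its coordinates $v_1,\dots,v_d$ are i.i.d.\ with $\EE[v_j]=N/2$ and $\EE[v_j^2]=N^2/2$, so by linearity $\EE\big[\distance^2(\bv,\ba)\big]=\sum_{j=1}^{d}\big(\EE[v_j^2]-2a_j\EE[v_j]+a_j^2\big)=\tfrac12 dN^2-N\normu{\ba}+\norm{\ba}^2$, which is exactly the asserted identity and which also sets up the notation convenient for the second-moment computations to follow.
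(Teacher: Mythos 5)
Your proposal is correct and follows essentially the same route as the paper: expand $\distance^2(\bv,\ba)$ coordinatewise, use that for each fixed $j$ exactly $2^{d-1}$ vertices have $j$-th coordinate $0$ and $2^{d-1}$ have it equal to $N$, sum, and divide by $2^d$. The probabilistic rephrasing at the end is just a repackaging of the same computation, so there is nothing further to compare.
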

In particular, Lemma~\ref{LemmaAverageV} says that the average distance from
the origin to the vertices of $[0,N]^d$ equals $\sqrt{dN^2/2}$, 
which is the same as saying that the average normalized distance is $1/\sqrt{2}$.

\subsection{The second moment about the average distances to the vertices}\label{SubsectionM2V}
Starting with the definition of the second moment, which we denote by
$\fM_{2; \ba,\cV}(d,N)$, we rearrange the terms in its defining summation
to aggregate the average and make use of Lemma~\ref{LemmaAverageV}.
Thus, writing shortly $A_{\ba,\cV}$ instead of $A_{\ba,\cV}(d,N)$, we have:
\begin{equation}\label{eqM2aV1}
    \begin{split}
    \fM_{2; \ba,\cV}(d,N) &:= \frac{1}{\#\cV}\sum_{\bv\in\cV}
    \left(\distance^2(\bv,\ba)-A_{\ba,\cV}\right)^2\\
    &=\frac{1}{2^d}\sum_{\bv\in\cV}
    \left(\distance^4(\bv,\ba)
          -2\distance^2(\bv,\ba)A_{\ba,\cV}+A_{\ba,\cV}^2\right)\\
    &=\frac{1}{2^d}\left(\sum_{\bv\in\cV}\distance^4(\bv,\ba)
    -2A_{\ba,\cV}\sum_{\bv\in\cV}\distance^2(\bv,\ba)
    +\sum_{\bv\in\cV}A_{\ba,\cV}^2\right)\\
    &=\frac{1}{2^d}\cdot\Sigma_{\ba,\cV}-A_{\ba,\cV}^2.
    \end{split}
\end{equation}
To find the sum denoted by $\Sigma_{\ba,\cV}$ in~\eqref{eqM2aV1},  we write it explicitly:
\begin{equation}\label{eqM2aV11}
    \begin{split}
    \Sigma_{\ba,\cV} &:=\sum_{\bv\in\cV}\distance^4(\bv,\ba)
     = \sum_{\bv\in\cV} \sum_{m=1}^d \sum_{n=1}^d h(v_m,v_n, a_m,a_n),
    \end{split}
\end{equation}
where $h(v_m,v_n, a_m,a_n)=(v_m-a_m)^2(v_n-a_n)^2$ 
is the sum of the following nine monomials:
\begin{equation}\label{eqM2aV}
    \begin{split}
      h(v_m,v_n, a_m,a_n)=& v_m^2v_n^2-2v_m^2 v_n a_n + v_m^2a_n^2\\
      & -2v_m a_m v_n^2 + 4 v_m a_m v_n a_n -2v_m a_m a_n^2\\
       & +a_m^2 v_n^2 -2 a_m^2 v_n a_n + a_m^2a_n^2.
    \end{split}
\end{equation}
Next we take into account the contribution of each monomial in~\eqref{eqM2aV} 
to the corresponding sum in~\eqref{eqM2aV1}. For this we separate the 
group of the $d$ diagonal terms (those with $m=n$) from the group of the
$d^2-d$ off-diagonal terms, and then count the number of vertices 
with the non-zero components at the right place.
We have:
\begin{equation}\label{eqM2aV99a}
    \begin{split}
       S_1(\ba,\cV)&=\sum_{\bv\in\cV} \sum_{m=1}^d \sum_{n=1}^d v_m^2v_n^2
         = N^4 \left(d 2^{d-1}+ (d^2-d)2^{d-2}\right);\\
       S_2(\ba,\cV)&= \sum_{\bv\in\cV} \sum_{m=1}^d \sum_{n=1}^dv_m^2 v_n a_n
        =  N^3 \left(2^{d-1}\normu{\ba}+ (d-1)2^{d-2}\normu{\ba}\right);\\
       S_3(\ba,\cV)&=\sum_{\bv\in\cV} \sum_{m=1}^d \sum_{n=1}^d v_m^2a_n^2 
         = N^2 d2^{d-1}\norm{\ba}^2;\\
    \end{split}
\end{equation}
then
\begin{equation}\label{eqM2aV99b}
    \begin{split}
       S_4(\ba,\cV)&=S_2(\ba,\cV)= \sum_{\bv\in\cV} \sum_{m=1}^d \sum_{n=1}^dv_m a_m v_n^2 \\
        &=  N^3 \left(2^{d-1}\normu{\ba}+ (d-1)2^{d-2}\normu{\ba}\right);\\ 
    S_5(\ba,\cV)&= \sum_{\bv\in\cV} \sum_{m=1}^d \sum_{n=1}^dv_m a_m v_n a_n
        =  N^2 \left(2^{d-1}\norm{\ba}^2+ 2^{d-2}(\normu{\ba}^2-\norm{\ba}^2)\right);\\
        S_6(\ba,\cV)&= \sum_{\bv\in\cV} \sum_{m=1}^d \sum_{n=1}^dv_m a_m a_n^2
        =  N 2^{d-1}\normu{\ba}\norm{\ba}^2;\\
    \end{split}
\end{equation}
and
\begin{equation}\label{eqM2aV99c}
    \begin{split}
        S_7(\ba,\cV)&= S_3(\ba,\cV)=\sum_{\bv\in\cV} \sum_{m=1}^d \sum_{n=1}^da_m^2 v_n^2
        =  N^2 d 2^{d-1}\norm{\ba}^2;\\
         S_8(\ba,\cV)&= S_6(\ba,\cV)= \sum_{\bv\in\cV} \sum_{m=1}^d \sum_{n=1}^d a_m^2 v_n a_n
        =  N2^{d-1}\normu{\ba}\norm{\ba}^2;\\
         S_9(\ba,\cV)&= \sum_{\bv\in\cV} \sum_{m=1}^d \sum_{n=1}^d a_m^2 a_n^2
        =  2^d\norm{\ba}^4.\\
    \end{split}
\end{equation}
On adding the sums in~\eqref{eqM2aV99a},~\eqref{eqM2aV99b} and~\eqref{eqM2aV99c} as many times as indicated 
by the appearances of their defining monomials in~\eqref{eqM2aV}, we find that
the sum $\Sigma_{\ba,\cV}$ from~\eqref{eqM2aV11} is equal to
\begin{equation}\label{eqsigmaTot}
    \begin{split}
    \Sigma_{\ba,\cV} &= \big(S_1(\ba,\cV)-2S_2(\ba,\cV)+S_3(\ba,\cV)\big)\\
      &\phantom{ = (S_1(\ba,\cV)-} -\big(2S_4(\ba,\cV)-4S_5(\ba,\cV)+2S_6(\ba,\cV)\big)  \\
    &\phantom{ (S_1(\ba,\cV)-2S_2(\ba,\cV)+ S_3(  } +\big(S_7(\ba,\cV)-2S_8(\ba,\cV)+S_9(\ba,\cV)\big)\\
     &= 2^{d-2}\left(
     (d^2 + d)N^4 - 4(d+1)\normu{\ba}N^3 
     - 8\normu{\ba}\norm{\ba}^2 N \right.\\
     &\phantom{= 2^{d-2}( (d^2 + d)N^4 - }
     \left.+ 4\big(\normu{\ba}^2 + (d+1)\norm{\ba}^2\big)N^2 + 4\norm{\ba}^4\right).
    \end{split}
\end{equation}

Then, inserting the results from~\eqref{eqsigmaTot} and~\eqref{eqApV}
in formula~\eqref{eqM2aV1}, we arrive at a closed form expression 
for $\fM_{2; \ba,\cV}(d,N)$, which we state in the next lemma.
\begin{lemma}\label{LemmaM2V}
Let $d,N \ge 1$ be integers and let $\cV$ be the set of vertices of the hypercube $[0,N]^d$. 
Then, the second moment about the mean $A_{\ba,\cV}(d,N)$ equals
\begin{equation*}
    \begin{split}
    \fM_{2; \ba,\cV}(d,N) =
     \frac{1}{\#\cV}\sum_{\bv\in\cV} \left(\distance^2(\bv,\ba)-A_{\ba,\cV}(d,N)\right)^2
    &= \frac{1}{4}dN^4-\normu{\ba} N^3+\norm{\ba}^2N^2.
    \end{split}
\end{equation*}
\end{lemma}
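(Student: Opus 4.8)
The plan is to complete the computation already laid out in this section. Equation~\eqref{eqM2aV1} has reduced the problem to the identity $\fM_{2; \ba,\cV}(d,N) = 2^{-d}\,\Sigma_{\ba,\cV} - A_{\ba,\cV}^2$, so the only remaining task is to substitute the closed form of $\Sigma_{\ba,\cV}$ recorded in~\eqref{eqsigmaTot} and the value of $A_{\ba,\cV}$ from~\eqref{eqApV}, then expand and collect the coefficients of $N^4,N^3,N^2,N^1,N^0$.

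Carrying this out, \eqref{eqsigmaTot} gives
\[
2^{-d}\Sigma_{\ba,\cV}=\tfrac14\Big((d^2+d)N^4-4(d+1)\normu{\ba}N^3+4\big(\normu{\ba}^2+(d+1)\norm{\ba}^2\big)N^2-8\normu{\ba}\norm{\ba}^2N+4\norm{\ba}^4\Big),
\]
whereas squaring $A_{\ba,\cV}=\tfrac12 dN^2-\normu{\ba}N+\norm{\ba}^2$ yields $\tfrac14 d^2N^4-d\normu{\ba}N^3+(\normu{\ba}^2+d\norm{\ba}^2)N^2-2\normu{\ba}\norm{\ba}^2N+\norm{\ba}^4$. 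Subtracting, I would verify term by term that the $N^4$-coefficient drops from $\tfrac14(d^2+d)$ to $\tfrac14 d$; the $N^3$-coefficient drops from $-(d+1)\normu{\ba}$ to $-\normu{\ba}$; the $N^2$-coefficient drops from $\normu{\ba}^2+(d+1)\norm{\ba}^2$ to $\norm{\ba}^2$, with the $\normu{\ba}^2N^2$ contributions cancelling exactly; and the $N^1$ and $N^0$ terms cancel outright. What remains is precisely $\tfrac14 dN^4-\normu{\ba}N^3+\norm{\ba}^2N^2$, as asserted.

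I do not expect a genuine obstacle: the argument is an exact algebraic identity — valid for all integers $d,N\ge1$ and all $\ba\in\RR^d$, with no error term — and the main effort is bookkeeping. The one subtlety worth flagging is to keep $\normu{\ba}^2$ (the square of the coordinate sum) distinct from $\norm{\ba}^2$ (the sum of squared coordinates), since both appear and only their particular combination is forced to collapse. If one preferred not to quote~\eqref{eqsigmaTot}, the alternative would be to reprove it directly from~\eqref{eqM2aV} by expanding $\distance^4(\bv,\ba)=\sum_{m,n}(v_m-a_m)^2(v_n-a_n)^2$ into its nine monomial types, separating the $d$ diagonal terms $m=n$ from the $d^2-d$ off-diagonal ones, and evaluating each vertex-sum via the counts ``$2^{d-1}$ vertices have a prescribed coordinate equal to $N$'' and ``$2^{d-2}$ have two prescribed distinct coordinates equal to $N$'', exactly as in~\eqref{eqM2aV99a}--\eqref{eqM2aV99c}; that is where the real (but still routine) work lies.
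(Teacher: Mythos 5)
Your proposal is correct and follows exactly the paper's route: the paper also derives the lemma by inserting the closed form of $\Sigma_{\ba,\cV}$ from~\eqref{eqsigmaTot} and $A_{\ba,\cV}$ from~\eqref{eqApV} into~\eqref{eqM2aV1} and simplifying, and your coefficient-by-coefficient bookkeeping (including the cancellation of the $\normu{\ba}^2N^2$, $N$, and constant terms) checks out.
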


\section{The average of the squares and the square root of the average}\label{SectionAVAV}

Since the normalized second moment $\fM_{2;\ba,\cV}(d,N)/d^2N^4 = o(1)$ as $d\to\infty$, 
it follows 
that for any fixed $\ba\in\cW$, almost
all normalized distances from $\ba$ to the vertices of $\cW$ are close to $\sqrt{A_{\ba,\cV}(d,N)/dN^2}$. 
This is the object of the following theorem.

\begin{theorem}\label{TheoremV1}
Let $B_{\ba,\cV}:=A_{\ba,\cV}(d,N)/dN^2$ denote the average of the squares of
the normalized distances from $\ba$ to the vertices of $[0,N]^d$.
Let $\eta\in (0,1/2)$ be fixed.
Then, for any integers $d\ge 2$, $N\ge 1$, and any point $\ba\in\cW$, we have 
\begin{equation*}\label{eqPTV1}
       \sdfrac{1}{\#\cV} \#\left\{\bv\in \cV :
        \distance_d(\ba,\bv)\in \left[\sqrt{B_{\ba,\cV}}-\sdfrac{1}{d^\eta},\
                                    \sqrt{B_{\ba,\cV}}+\sdfrac{1}{d^\eta}    \right]
\right\}
\ge 1- \sdfrac{1}{d^{1-2\eta}}\,.
\end{equation*}
\end{theorem}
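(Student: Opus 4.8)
The plan is to prove Theorem~\ref{TheoremV1} by a second moment (Chebyshev) argument resting on Lemmas~\ref{LemmaAverageV} and~\ref{LemmaM2V}. Regard $\bv$ as chosen uniformly at random among the $2^d=\#\cV$ vertices of $\cV$, and set $X:=\distance_d^2(\ba,\bv)=\distance^2(\ba,\bv)/(dN^2)$. By Lemma~\ref{LemmaAverageV} the mean of $X$ is exactly $A_{\ba,\cV}(d,N)/(dN^2)=B_{\ba,\cV}$, and by Lemma~\ref{LemmaM2V} the variance of $X$ is
\begin{equation*}
    \frac{\fM_{2;\ba,\cV}(d,N)}{(dN^2)^2}
    =\frac{1}{4d}-\frac{\normu{\ba}}{d^2N}+\frac{\norm{\ba}^2}{d^2N^2}.
\end{equation*}
Thus the task splits into three pieces: (i) bound this variance from above, (ii) bound $B_{\ba,\cV}$ from below, and (iii) convert a small deviation of $X$ from $B_{\ba,\cV}$ into a small deviation of $\distance_d(\ba,\bv)=\sqrt{X}$ from $\sqrt{B_{\ba,\cV}}$.

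For (i) and (ii) I would use only the hypothesis $\ba\in\cW$, i.e.\ $0\le a_j\le N$ for every $j$. From $a_j^2\le Na_j$ we get $\norm{\ba}^2\le N\normu{\ba}$, hence $-\normu{\ba}/(d^2N)+\norm{\ba}^2/(d^2N^2)\le 0$ and the variance of $X$ is at most $1/(4d)$. For the mean, writing $A_{\ba,\cV}(d,N)=\sum_{j=1}^d\big(\tfrac12N^2-a_jN+a_j^2\big)=\sum_{j=1}^d\big((a_j-\tfrac N2)^2+\tfrac{N^2}{4}\big)\ge \tfrac{dN^2}{4}$ gives $B_{\ba,\cV}\ge 1/4$, so $\sqrt{B_{\ba,\cV}}\ge 1/2$. (It is precisely the variance bound that needs $\ba\in\cW$ rather than an arbitrary point of $\RR^d$; the lower bound on $B_{\ba,\cV}$ holds for any $\ba$.)

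For (iii), from $\abs{X-B_{\ba,\cV}}=\abs{\sqrt{X}-\sqrt{B_{\ba,\cV}}}\,\big(\sqrt{X}+\sqrt{B_{\ba,\cV}}\big)\ge \abs{\sqrt{X}-\sqrt{B_{\ba,\cV}}}\cdot\sqrt{B_{\ba,\cV}}$ one sees that if $\distance_d(\ba,\bv)\notin[\sqrt{B_{\ba,\cV}}-d^{-\eta},\sqrt{B_{\ba,\cV}}+d^{-\eta}]$, then $\abs{X-B_{\ba,\cV}}\ge d^{-\eta}\sqrt{B_{\ba,\cV}}\ge \tfrac12 d^{-\eta}$. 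Therefore, by Chebyshev's inequality together with (i) and (ii),
\begin{equation*}
    \frac{1}{\#\cV}\#\Big\{\bv\in\cV:\abs{\distance_d(\ba,\bv)-\sqrt{B_{\ba,\cV}}}>\tfrac{1}{d^\eta}\Big\}
    \le\frac{\fM_{2;\ba,\cV}(d,N)/(dN^2)^2}{\big(\tfrac12 d^{-\eta}\big)^2}
    \le\frac{1/(4d)}{\tfrac14 d^{-2\eta}}=\frac{1}{d^{1-2\eta}},
\end{equation*}
and the hypothesis $\eta<1/2$ ensures $1-2\eta>0$, so this complement is genuinely small. Passing to the complementary event gives the claimed lower bound $1-1/d^{1-2\eta}$. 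The only real subtlety, and the step I would be most careful about, is exactly this passage from the quadratic quantity $X$ to the linear quantity $\sqrt{X}$, which is what forces the a priori estimate $\sqrt{B_{\ba,\cV}}\ge 1/2$; everything else is a direct application of Chebyshev's inequality to the two closed-form moments already computed.
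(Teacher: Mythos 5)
Your proposal is correct and follows essentially the same route as the paper: a Chebyshev-type second moment argument using Lemmas~\ref{LemmaAverageV} and~\ref{LemmaM2V}, with the same two key estimates (variance at most $1/(4d)$ via $a_j^2\le Na_j$ for $\ba\in\cW$, and $\sqrt{B_{\ba,\cV}}\ge 1/2$ via completing the square) and the same passage from $\distance_d^2$ to $\distance_d$ by factoring the difference of squares. No gaps; the constants and the final bound $1-1/d^{1-2\eta}$ match the paper's proof.
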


\begin{proof}
    Let $\eta, d, N, \ba$ be as in the statement of the theorem.
    Since
    \begin{equation*}
        -\normu{\ba}N+\norm{\ba}^2 \le 0,
    \end{equation*}
   from Lemma~\ref{LemmaM2V} we find  
    that
    \begin{equation}\label{eqM2VerticesUpperBound}
    \begin{split}
        \sdfrac{\fM_{2;\ba,\cV}(d,N)}{d^2N^4} \le \sdfrac{1}{4d}\,.
    \end{split}
    \end{equation}
    On the other hand, for any parameters $b,T>0$,
    \begin{equation}\label{eqM2VerticesLowerBound}
    \begin{split}
        \sdfrac{\fM_{2;\ba,\cV}(d,N)}{d^2N^4} 
            &= \sdfrac{1}{\#\cV} \times\sum_{\bv\in\cV} 
            \left( \distance_d^2(\ba,\bv) - B_{\ba,\cV} \right)^2 \\
        &\ge \sdfrac{1}{\#\cV}\times \!\! \sum_{\substack{\bv\in\cV \\ \abs{\distance_d^2(\ba,\bv)-B_{\ba,\cV}} \ge \tfrac{1}{bT} }}
            \!\! \left(\distance_d^2(\ba,\bv)-B_{\ba,\cV}\right)^2 \\
        &\ge \sdfrac{1}{\#\cV} \times 
            \!\! \sum_{\substack{\bv\in\cV \\ \abs{\distance_d^2(\ba,\bv)-B_{\ba,\cV}} \ge \tfrac{1}{bT} }}
            \!\! \sdfrac{1}{b^2T^2} \,.
    \end{split}
    \end{equation}
    Then, on combining \eqref{eqM2VerticesUpperBound} and \eqref{eqM2VerticesLowerBound}, we see that
    \begin{equation}\label{eqThmV1SquaresBound}
    \begin{split}
        \sdfrac{1}{\#\cV} \#\left\{ \bv\in\cV \colon \abs{\distance_d^2(\ba,\bv)-B_{\ba,\cV}} \ge \sdfrac{1}{bT} \right\}
            \le \sdfrac{b^2T^2}{4d}.
    \end{split}
    \end{equation}
    Now, by Lemma~\ref{LemmaAverageV} and the definiton of $B_{\ba,\cV}$ in the hypothesis, we find that
    \begin{equation}\label{eqLowerBoundBV}
        \sqrt{B_{\ba,\cV}} = \sqrt{\sdfrac{1}{2} + \sdfrac{1}{dN^2}\left( \norm{\ba}^2 - N\normu{\ba} \right)}
            \geq \sqrt{\sdfrac{1}{2} - \sdfrac{1}{4}} = \sdfrac{1}{2}\,.
    \end{equation}
(Here we have taken into account the fact that the minimum of 
$\norm{\ba}^2-N\normu{\ba}$ is attained 
in the middle of the hypercube, which is a consequence of the fact that, 
independently on each of the $d$ coordinates, 
the minimum of $x\mapsto x^2-Nx$ is reached for $x=N/2$.)
Then, using inequality~\eqref{eqLowerBoundBV} 
and the fact that $\distance_d(\ba,\bv)\geq 0$, it follows that
    \begin{equation*}
    \begin{split}
        \abs{\distance_d^2(\ba,\bv) - B_{\ba,\cV}} &= \abs{\distance_d(\ba,\bv) - \sqrt{B_{\ba,\cV}}}\left( \distance_d(\ba,\bv) + \sqrt{B_{\ba,\cV}} \right)\\
            &\ge \sdfrac{1}{2}\abs{\distance_d(\ba,\bc) - \sqrt{B_{\ba,\cV}}}.
    \end{split}
    \end{equation*}
Therefore, we can tighten the restriction in the set on the left-hand side of
inequality~\eqref{eqThmV1SquaresBound} by taking $b=2$, and, as a consequence,
we find that
    \begin{equation*}
    \begin{split}
         \sdfrac{1}{\#\cV} \#\left\{ \bv\in\cV \colon \abs{\distance_d(\ba,\bv)-\sqrt{B_{\ba,\cV}}} \ge \sdfrac{1}{T} \right\}
            \le \sdfrac{T^2}{d}.
    \end{split}
    \end{equation*}
Finally, we take $T=d^{\eta}$ and then see that this completes the proof of Theorem~\ref{TheoremV1}.
\end{proof}


\section{Triangles involving the Vertices of the Hypercube}\label{SectionVTriangles}
\setcounter{subsection}{0} 

In this section we analyze the set of triangles in which at least one vertex is a corner of the
hypercube.  
We count them to see how many of them are close or away from the average.

\subsection{Triangles formed from any fixed point and two vertices of the cube}
\noindent
From Theorem~\ref{TheoremV1}, it will follow that almost all pairs of distinct vertices $(\bvi,\bvii)\in\cV^2$ are each at a distance close to
$\sqrt{B_{\ba,\cV}}$ from $\ba$. As a result, one can say that almost all triangles formed by $\ba$ and two vertices are `almost isosceles'. 
If we denote by $\cT_{\ba,\cV^2}\subset\cV^2$ the set of all pairs of vertices $(\bvi,\bvii)$, which form together with $\ba$ a non-degenerate 
 triangle (that is, triangles with distinct vertices), then
\begin{equation}\label{eqv1v2}
\begin{split}
    &\sdfrac{1}{\#\cT_{\ba,\cV^2}}
    \#\left\{ (\bvi,\bvii)\in\cT_{\ba,\cV^2} \colon \distance_d(\ba,\bvi), \distance_d(\ba,\bvii) \in
        \left[ \sqrt{B_{\ba,\cV}}-\sdfrac{1}{d^\eta}, \sqrt{B_{\ba,\cV}}+\sdfrac{1}{d^\eta}\right]\right\} \\
    &\phantom{(\bvi, }
    \ge \sdfrac{1}{\#\cV}\left( \#\left\{ \bvi\in\cV \colon \distance_d(\ba,\bvi) \in 
        \left[ \sqrt{B_{\ba,\cV}}-\sdfrac{1}{d^\eta}, \sqrt{B_{\ba,\cV}}+\sdfrac{1}{d^\eta}\right] \right\} 
        - 1 \right)\\
    &\phantom{(\bvi, \ge}\times\sdfrac{1}{\#\cV}\left( \#\left\{ \bvii\in\cV \colon \distance_d(\ba,\bvii) \in 
        \left[ \sqrt{B_{\ba,\cV}}-\sdfrac{1}{d^\eta}, \sqrt{B_{\ba,\cV}}+\sdfrac{1}{d^\eta}\right] \right\}
        - 2 \right),
\end{split}
\end{equation}
where we subtract $1$ and $2$, respectively, from the two terms in the right-hand side of
\eqref{eqv1v2} to account for the possibilities that $\ba, \bvi, \bvii$ 
form a degenerate triangle.

From Theorem~\ref{TheoremV1}, we see that the right-hand side of \eqref{eqv1v2} is bounded below by
\begin{equation*}
\begin{split}
    \left(1-\sdfrac{1}{d^{1-2\eta}}-\sdfrac{1}{2^d}\right)
    &
    \left(1-\sdfrac{1}{d^{1-2\eta}}-\sdfrac{2}{2^d}\right)\\
     &=1-\sdfrac{2}{d^{1-2\eta}} + \sdfrac{1}{d^{2-4\eta}}-\sdfrac{3}{2^d}+\sdfrac{3}{2^d d^{1-2\eta}}+\sdfrac{2}{2^{2d}} \\
    &\ge 1-\sdfrac{2}{d^{1-2\eta}},
\end{split}
\end{equation*}
for $d\geq 8$, since in that range
$1/d^{2-4\eta}-3/2^d \ge 0$. We now arrive at the following theorem on isosceles triangles.
\begin{theorem}\label{ThVIsosceles}
Let  $\ba\in\cW$ be fixed and let $\cT_{\ba,\cV^2}$ 
denote the set of triangles with distinct vertices $\ba$ and $\bv_1,\bv_2\in\cV$.
Let $\eta\in(0,1/2)$ be fixed. Then, for any integers $d\ge8$ and $N\ge1$, we have
    \begin{equation*}
        \frac{1}{\#\cT_{\ba,\cV^2}}
        \#\left\{ (\bvi,\bvii)\in\cT_{\ba,\cV^2} \colon 
            \abs{\distance_d(\ba,\bvi)-\distance_d(\ba,\bvii)} \le \sdfrac{2}{d^\eta} \right\}
        \ge 1-\sdfrac{2}{d^{1-2\eta}}.
    \end{equation*}
\end{theorem}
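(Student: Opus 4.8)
The plan is to deduce Theorem~\ref{ThVIsosceles} directly from Theorem~\ref{TheoremV1} by the elementary ``union bound on triangles'' estimate that has already been carried out in the text preceding the statement. Concretely, the strategy has three moves: (i) observe that if both $\distance_d(\ba,\bvi)$ and $\distance_d(\ba,\bvii)$ lie in the interval $\bigl[\sqrt{B_{\ba,\cV}}-d^{-\eta},\,\sqrt{B_{\ba,\cV}}+d^{-\eta}\bigr]$, then by the triangle inequality $\abs{\distance_d(\ba,\bvi)-\distance_d(\ba,\bvii)}\le 2/d^\eta$, so the ``good'' set of pairs for the $2/d^\eta$-isosceles condition contains the set of pairs for which each coordinate distance is close to $\sqrt{B_{\ba,\cV}}$; (ii) bound the proportion of the latter set from below by a product of two single-vertex proportions, as in~\eqref{eqv1v2}, correcting by $1$ and $2$ in the numerators to excise the degenerate pairs $\bvi=\ba$ or $\bvii=\ba$ or $\bvi=\bvii$ so that only genuine triangles in $\cT_{\ba,\cV^2}$ are counted; (iii) feed in the lower bound $1-d^{-(1-2\eta)}$ from Theorem~\ref{TheoremV1} for each factor, expand the resulting product, and discard the lower-order terms.

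First I would fix $\ba\in\cW$, $\eta\in(0,1/2)$, and integers $d\ge 8$, $N\ge 1$, and set $G:=\{\bv\in\cV:\distance_d(\ba,\bv)\in[\sqrt{B_{\ba,\cV}}-d^{-\eta},\sqrt{B_{\ba,\cV}}+d^{-\eta}]\}$, so that Theorem~\ref{TheoremV1} gives $\#G\ge (1-d^{-(1-2\eta)})\,\#\cV$. Then I would record the containment
\begin{equation*}
\bigl\{(\bvi,\bvii)\in\cT_{\ba,\cV^2}:\bvi,\bvii\in G\bigr\}\subseteq
\bigl\{(\bvi,\bvii)\in\cT_{\ba,\cV^2}:\abs{\distance_d(\ba,\bvi)-\distance_d(\ba,\bvii)}\le 2/d^\eta\bigr\},
\end{equation*}
which reduces the theorem to a lower bound on $\#\{(\bvi,\bvii)\in\cT_{\ba,\cV^2}:\bvi,\bvii\in G\}/\#\cT_{\ba,\cV^2}$. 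Since $\#\cT_{\ba,\cV^2}\le (\#\cV)^2$ and the number of pairs in $G\times G$ that fail to lie in $\cT_{\ba,\cV^2}$ (those with $\bvi=\bvii$, or with a vertex equal to $\ba$) is at most $\#G+2$, a short count gives exactly the two-factor lower bound displayed in~\eqref{eqv1v2}; substituting $\#G\ge(1-d^{-(1-2\eta)})\#\cV$ and $\#\cV=2^d$ yields the product $\bigl(1-d^{-(1-2\eta)}-2^{-d}\bigr)\bigl(1-d^{-(1-2\eta)}-2^{1-d}\bigr)$.

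Finally I would expand this product as in the displayed computation in the text, obtaining $1-2d^{-(1-2\eta)}+d^{-(2-4\eta)}-3\cdot2^{-d}+\cdots$, and note that for $d\ge 8$ we have $d^{-(2-4\eta)}\ge 3\cdot 2^{-d}$ (here using $2-4\eta>0$, so the term $d^{-(2-4\eta)}$ is at least $d^{-2}$, and $d^{-2}\ge 3\cdot 2^{-d}$ for $d\ge 8$) together with the nonnegativity of the remaining cross terms; hence the product is $\ge 1-2d^{-(1-2\eta)}$, which is the claimed bound. There is no real obstacle here: the content is entirely in Theorem~\ref{TheoremV1}, and the only things to be careful about are the bookkeeping of degenerate triangles (the $-1$ and $-2$ corrections) and the elementary verification of the threshold $d\ge 8$ for the term comparison $d^{-(2-4\eta)}\ge 3/2^d$. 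The mildly delicate point, if any, is making sure the correction handles the case $\ba\notin\cV$ and the case $\ba\in\cV$ uniformly; in both cases the bound $\#G+2$ on the excluded pairs suffices, so the argument goes through without splitting into cases.
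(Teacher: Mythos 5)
Your proposal is essentially the paper's own proof: it deduces the theorem from Theorem~\ref{TheoremV1} via the triangle inequality and the two-factor lower bound of~\eqref{eqv1v2} with the $-1$ and $-2$ corrections for degenerate triangles, then expands the product and uses $d^{-(2-4\eta)}\ge 3\cdot 2^{-d}$ for $d\ge 8$, exactly as in the text. One small bookkeeping quibble: when $\ba\in\cV$ the number of degenerate pairs inside $G\times G$ can be of size about $3\#G$ (every pair with $\bvi=\ba$ or $\bvii=\ba$ or $\bvi=\bvii$), not $\#G+2$ as you state; however, the correct count still gives at least $(\#G-1)(\#G-2)$ good pairs, which is precisely the bound from~\eqref{eqv1v2} that you actually use, so the argument and the final estimate are unaffected.
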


Seeing that almost every such triangle is almost isosceles, we may wonder if any of 
these triangles can be equilateral, or perhaps right triangle. 

Let $\bc=\big(\frac{N}{2},\dots,\frac{N}{2}\big)$ be the center of the hypercube.
Notice that $\bc$ may belong or not to~$\cW$, but in any case, if $N$ had been odd,
than the distance from $\bc$ to a point in $\cW$ would have been not greater than
$\sqrt{d}/2$. This is the same as saying that the normalized distance from
$\bc$ to $\cW$ is at most $1/(2N)$ and we may make reasoning with a point
with integer coordinates that is close to $\bc$ instead of $\bc$.
For simplicity we may assume here that $N$ is even, but this is not necessary, 
since in fact, in the proofs of Theorems~\ref{TheoremV1} and~\ref{ThVIsosceles},
we did not make use of the fact that the coordinates of $\ba$ are integers.

Note that all vertices from $\cV$ are equally far from the center and
\begin{equation}\label{eqDistanceCV}
    \distance_d(\bv,\bc)=\sdfrac{1}{\sqrt{d}N}
    \Big(\sum_{1\le j\le d}\big(N/2\big)^2\Big)^{1/2}
    = \sdfrac{1}{2}, \text{ for $\bv\in\cV$,}
\end{equation}
while for arbitrary $\ba=(a_1,\dots,a_d)$, the normalized distance to $\bc$ is
\begin{equation}\label{eqDistanceCba}
    \distance_d(\ba,\bc)=\sdfrac{1}{\sqrt{d}N}
    \Big(\sum_{1\le j\le d}\big(a_j-N/2\big)^2\Big)^{1/2}
    = \sdfrac{1}{\sqrt{d}N}\left(\sdfrac{dN^2}{4}-\normu{\ba}N+\norm{\ba}^2\right)^{1/2}.
\end{equation}

Now let us point out the following two observations.
\begin{remark}\label{Remark1}
\texttt{(1)} Taking $\ba$ in $\cV$, Theorem~\ref{TheoremV1} tells us that the normalized
distance between almost any two vertices in $\cV$ is close to $1/\sqrt{2}$.

\texttt{(2)} 
By Lemma~\ref{LemmaAverageV}, the normalized average of the squares of distances from
$\ba$ to vertices in $\cV$ is 
$ B_{\ba,\cV}=A_{\ba,\cV}/(dN^2) =1/2-\normu{\ba}/(dN)+\norm{\ba}^2/(dN^2)$.
Then, by~\eqref{eqDistanceCV} and~\eqref{eqDistanceCba} this can further be expressed as
\begin{equation}\label{eqPythagoras}
    B_{\ba,\cV} = \sdfrac{1}{4} + \left(\sdfrac{1}{4}-\sdfrac{\normu{\ba}}{dN}+\sdfrac{\norm{\ba}^2}{dN^2}\right) 
            = \distance_d^2(\bv,\bc)+\distance_d^2(\ba,\bc), 
            \text{ for any $\bv\in\cV$}.
\end{equation}
In particular,~\eqref{eqPythagoras} shows that the average $B_{\ba,\cV}$
depends only on the normalized distance $\distance_d(\ba,\bc)$, which
we shall further denote by $r_{\ba}$.
\end{remark}
\smallskip

On combining Theorem~\ref{ThVIsosceles},~\eqref{eqDistanceCV}, 
and the observations from Remark~\ref{Remark1},
we see that almost all triangles in  $\cT_{\ba,\cV^2}$ have one side lengths close to
$1/\sqrt{2}$, while the other are both close to $\sqrt{1/4+r_{\ba}^2}$.
Since, by normalization, we know that $0\le r_{\ba} \le 1/2$, it follows that
\begin{equation}\label{eqBoundra}
    \sdfrac{1}{2} \le \sqrt{\sdfrac{1}{4}+r_{\ba}^2} \le \sdfrac{1}{\sqrt{2}}.
\end{equation}
In particular, if $r_{\ba}=1/2$, which occurs when $\ba$ is a vertex, 
we see that almost all triangles have each of their side lengths close to $1/\sqrt{2}$.
In other words, almost all triangles formed by three vertices of the hypercube 
are `almost equilateral'.
On the other hand, if $r_{\ba}=0$, which occurs when $\ba=\bc$ is at the center of the
hypercube, we see that almost all triangles have side lengths close to $1/\sqrt{2}$, $1/2,$ and $1/2$, respectively, that is, they are almost isosceles with an almost 
right angle in $\bc$.
Making this more explicit, we argue similarly as in \eqref{eqv1v2} to find the proportion of non-degenerate triangles $(\ba,\bvi,\bvii)$ such that
\begin{equation}\label{eqCommonTriangleConditions}
\begin{split}
    \distance_d(\ba,\bvi), \distance_d(\ba,\bvii) &\in \left[ \sqrt{B_{\ba,\cV}}-\sdfrac{1}{d^\eta}, \sqrt{B_{\ba,\cV}}+\sdfrac{1}{d^\eta} \right],
    \text{ and } \\
    \distance_d(\bvi,\bvii) &\in \left[ \sdfrac{1}{\sqrt{2}}-\sdfrac{1}{d^\eta}, 
    \sdfrac{1}{\sqrt{2}}+\sdfrac{1}{d^\eta} \right].
\end{split}
\end{equation}
Firstly, for any $0<\eta<1/2$, from Theorem~\ref{TheoremV1}, we know that 
for any vertex $\bv\in\cV$, the proportion of vertices $\bvi\in\cV$ such that
\begin{equation*}
\begin{split}
    \distance_d(\ba,\bvi) \not\in \left[ \sqrt{B_{\ba,\cV}}-\sdfrac{1}{d^\eta}, \sqrt{B_{\ba,\cV}}+\sdfrac{1}{d^\eta} \right] \text{ and }
    \distance_d(\bvi,\bv) \not\in \left[ \sdfrac{1}{\sqrt{2}}-\sdfrac{1}{d^\eta}, \sdfrac{1}{\sqrt{2}}+\sdfrac{1}{d^\eta} \right],
\end{split}
\end{equation*}
is bounded above by
\begin{equation*}
    \sdfrac{1}{d^{1-2\eta}}+\sdfrac{1}{d^{1-2\eta}} = \sdfrac{2}{d^{1-2\eta}}.
\end{equation*}
Therefore, where $\bv\in\cV$ can be taken to be any vertex, the proportion of non-degenerate 
triangles formed by distinct vertices $(\bvi,\bvii,\ba)$,
which satisfy conditions in~\eqref{eqCommonTriangleConditions}, is bounded below by
\begin{equation*}
\begin{split}
    &\sdfrac{1}{\#\cV}\bigg(\#\left\{ \bvi\in\cV \colon 
        \distance_d(\ba,\bvi) \in \left[ \sqrt{B_{\ba,\cV}}-\sdfrac{1}{d^\eta}, \sqrt{B_{\ba,\cV}}+\sdfrac{1}{d^\eta} \right] 
        \right\} - 1 \bigg) \\
    \times&\sdfrac{1}{\#\cV}\bigg(\#\bigg\{ \bvii\in\cV \colon
        \distance_d(\ba,\bvii) \in \left[ \sqrt{B_{\ba,\cV}}-\sdfrac{1}{d^\eta}, \sqrt{B_{\ba,\cV}}+\sdfrac{1}{d^\eta} \right], \text{ and }\\
    &\phantom{\qquad\qquad\qquad\qquad\qquad\quad} 
        \distance_d(\bv,\bvii) \in \left[\sdfrac{1}{\sqrt{2}}-\sdfrac{1}{d^\eta}, \sdfrac{1}{\sqrt{2}}+\sdfrac{1}{d^\eta} \right]
        \biggr\} - 2 \bigg) \\
    &\ge \left( 1-\sdfrac{1}{d^{1-2\eta}}-\sdfrac{1}{2^d}\right)
        \left( 1-\sdfrac{2}{d^{1-2\eta}}-\sdfrac{2}{2^d}\right) \\
    &\ge 1-\sdfrac{3}{d^{1-2\eta}},
\end{split}
\end{equation*}
for $d\ge 6$.
As a consequence, we now have the following theorem.
\begin{theorem}\label{ThVSimilarTriangles}
Let $\cT_{\ba,\cV^2}$ be the set of triangles with distinct vertices $\ba$ and $\bv_1,\bv_2\in\cV$.
    Let $\eta\in(0,1/2)$ be fixed. Then, for any integers $d\ge6$, $N\ge1$, and any point
    $\ba\in\cW$, we have
    \begin{equation*}
        \sdfrac{1}{\#\cT_{\ba,\cV^2}} 
        \#\left\{ (\bvi,\bvii)\in\cT_{\ba,\cV^2} 
        :
            \ba, \bvi, \bvii \textup{ satisfy \eqref{eqCommonTriangleConditions}} \right\}
        \ge 1-\sdfrac{3}{d^{1-2\eta}}.
    \end{equation*}
\end{theorem}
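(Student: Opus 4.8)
The plan is to leverage Theorem~\ref{TheoremV1} twice, together with the distance computations in~\eqref{eqDistanceCV}--\eqref{eqPythagoras}, to control simultaneously the two distances $\distance_d(\ba,\bvi)$, $\distance_d(\ba,\bvii)$ and the mutual distance $\distance_d(\bvi,\bvii)$. First I would observe that, since the coordinates of $\ba$ were never used in the proof of Theorem~\ref{TheoremV1}, we may apply that theorem with $\ba$ replaced by an arbitrary fixed vertex $\bv\in\cV$. Applied with the genuine point $\ba$, it tells us that all but at most a proportion $1/d^{1-2\eta}$ of vertices $\bw\in\cV$ satisfy $\distance_d(\ba,\bw)\in[\sqrt{B_{\ba,\cV}}-d^{-\eta},\sqrt{B_{\ba,\cV}}+d^{-\eta}]$; applied with $\ba=\bv$ and using that $B_{\bv,\cV}=1/2$ by~\eqref{eqPythagoras} (since $r_{\bv}=1/2$ for a vertex), it tells us that all but at most a proportion $1/d^{1-2\eta}$ of vertices $\bw\in\cV$ satisfy $\distance_d(\bv,\bw)\in[1/\sqrt2-d^{-\eta},1/\sqrt2+d^{-\eta}]$.

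Next I would fix one ``good'' vertex $\bvi$ and union-bound over the ``bad'' events for the second vertex. The key point is that the set of $\bvii\in\cV$ failing either $\distance_d(\ba,\bvii)\in[\sqrt{B_{\ba,\cV}}-d^{-\eta},\sqrt{B_{\ba,\cV}}+d^{-\eta}]$ or $\distance_d(\bvi,\bvii)\in[1/\sqrt2-d^{-\eta},1/\sqrt2+d^{-\eta}]$ has size at most $(1/d^{1-2\eta})\#\cV+(1/d^{1-2\eta})\#\cV=(2/d^{1-2\eta})\#\cV$, where the first term comes from Theorem~\ref{TheoremV1} applied at $\ba$ and the second from Theorem~\ref{TheoremV1} applied at the fixed vertex $\bvi$ (which is legitimate for \emph{any} choice of $\bvi$, in particular for the ones we keep). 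Thus, provided $\bvi$ is itself chosen so that $\distance_d(\ba,\bvi)$ lies in the target interval --- which excludes at most $(1/d^{1-2\eta})\#\cV$ choices --- the number of admissible ordered pairs $(\bvi,\bvii)$ satisfying all three conditions in~\eqref{eqCommonTriangleConditions} is at least $\bigl(\#\cV(1-1/d^{1-2\eta})-1\bigr)\bigl(\#\cV(1-2/d^{1-2\eta})-2\bigr)$, the $-1$ and $-2$ accounting as before for degenerate triangles (coincidences $\bvi=\ba$ or $\bvii\in\{\ba,\bvi\}$). Dividing by $\#\cT_{\ba,\cV^2}\le(\#\cV)^2$ and using $\#\cV=2^d$ gives the lower bound $(1-1/d^{1-2\eta}-1/2^d)(1-2/d^{1-2\eta}-2/2^d)$, exactly as displayed just before the statement.

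Finally I would simplify this product to the clean bound $1-3/d^{1-2\eta}$. Expanding, the cross terms $1/d^{2-4\eta}$, $2/2^{2d}$, and the mixed positive terms are nonnegative and can be dropped, while the negative contributions beyond $-3/d^{1-2\eta}$ are $-3/2^d - (\text{small})$; one checks that for $d\ge 6$ the surviving positive term $1/d^{2-4\eta}$ dominates $3/2^d$ (here $\eta<1/2$ makes $2-4\eta>0$, so $1/d^{2-4\eta}$ is at least a fixed power of $1/d$, comfortably beating the exponentially small $3/2^d$ once $d\ge6$), which yields the claimed inequality. The one place that needs a little care --- and the only real obstacle --- is the bookkeeping of the degenerate cases and making sure the two applications of Theorem~\ref{TheoremV1} are to genuinely independent ``coordinates'' of the count (one over $\bvi$, one over $\bvii$ with $\bvi$ frozen), since the events ``$\distance_d(\ba,\bvii)$ close to $\sqrt{B_{\ba,\cV}}$'' and ``$\distance_d(\bvi,\bvii)$ close to $1/\sqrt2$'' both constrain the \emph{same} vertex $\bvii$ and so must be combined by a union bound rather than multiplied; everything else is the routine arithmetic already sketched in the lines preceding the theorem.
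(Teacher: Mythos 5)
Your argument follows the paper's proof essentially verbatim: Theorem~\ref{TheoremV1} applied once at $\ba$ and once at the fixed vertex $\bvi$ (where $B_{\bvi,\cV}=\tfrac14+r_{\bvi}^2=\tfrac12$), a union bound on the two constraints imposed on $\bvii$, the $-1$/$-2$ corrections for degenerate triangles, and the same product lower bound $\bigl(1-\tfrac{1}{d^{1-2\eta}}-\tfrac{1}{2^d}\bigr)\bigl(1-\tfrac{2}{d^{1-2\eta}}-\tfrac{2}{2^d}\bigr)$.

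The one place you are not quite right is the final numerical step, where you discard too much. Writing $x=1/d^{1-2\eta}$ and $y=1/2^d$, the product expands as $1-3x-3y+2x^2+4xy+2y^2$, so what must be verified is $2x^2+4xy+2y^2\ge 3y$. Your claim that the single term $1/d^{2-4\eta}$ (that is, $x^2$ with coefficient $1$) dominates $3/2^d$ for all $d\ge 6$ is false for $\eta$ close to $0$ when $d=6$ or $7$ (e.g.\ $1/36<3/64$); this weaker inequality is precisely the one the paper uses in Theorem~\ref{ThVIsosceles}, which is why that theorem is stated only for $d\ge 8$. Keeping the coefficient $2$ repairs the step: since $\eta<1/2$ gives $x\ge 1/d$, one has $2x^2\ge 2/d^2\ge 3/2^d$ for every $d\ge 6$, and the bound $1-3/d^{1-2\eta}$ follows. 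Apart from this easily fixed slip in routine arithmetic, your proof is correct and takes the same route as the paper's.
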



\subsection{Triangles formed by the center, any fixed point, and a vertex of the cube}
Counting triangles with one vertex in $\bc$ and another in $\cV$, and then sorting by the angle at 
the center, can be done by a somewhat winded combinatorial computation, but here
we can take a shortcut using the effective result from Section~\ref{SectionAVAV}.

By Theorem~\ref{TheoremV1} and~\eqref{eqPythagoras}, we know that if $\ba\in\cW$ is 
fixed, then for any $0<\eta<1/2$, for almost all vertices~$\bv\in\cV$, we have
\begin{equation*}
    \distance_d(\ba,\bv) = \sqrt{\distance_d^2(\bc,\bv)^2 + \distance_d^2(\bc,\ba)} + E(d^{-\eta}),
\end{equation*}
where $\abs{E(x)}\le x$. This suggests that the triangle with vertices
$\bc,\ba,\bv$
may be close to a right triangle.
Letting $\theta$ denote the angle at $\bc$, using the bound~\eqref{eqBoundra}, we see that
\begin{equation*}
\begin{split}
    \abs{\cos(\theta)} &= 
    \abs{\sdfrac{1}{4}+r_{\ba}^2-\left( \sqrt{\sdfrac{1}{4}+r_{\ba}^2} + E(d^{-\eta}) \right)^2}
        \Big(2\cdot\sdfrac{1}{2}\cdot r_{\ba}\Big)^{-1}    \\
    &\le \left(\sdfrac{2}{\sqrt{2}}\abs{E(d^{-\eta})} +\abs{E(d^{-\eta})}^2\right)r_{\ba}^{-1}
    \\
    &\le (\sqrt{2}+1)d^{-\eta}r_{\ba}^{-1}.
\end{split}
\end{equation*}
Picking any $\ba\in\cW$ such that $r_{\ba}\ge d^{\gamma-\eta}$ for some $\gamma>0$, we find that
\begin{equation*}
    \abs{\cos(\theta)} \le \big(\sqrt{2}+1\big)d^{-\gamma}.
\end{equation*}
Bearing in mind that we have an upper bound $r_{\ba}\le 1/2$, one should avoid picking~$\gamma$ 
too large or $d$ too small with respect to $\eta$, otherwise no such $\ba$
would exist. As a consequence we have obtained the following result.
\begin{theorem}\label{ThTriangleacv}
Let $\eta\in(0,1/2)$ be fixed. 
Suppose $d>2^{1/\eta}$ is an integer and let 
\mbox{$\gamma\in\big(0,\,\eta-\frac{\log 2}{\log d}\big]$.}
Further, for any integer $N\geq1$, with $\bc=\big(\frac{N}{2},\dots,\frac{N}{2}\big)$ 
denoting the center of the hypercube, fix any point $\ba\in\cW$ such that $\distance_d(\ba,\bc)\geq d^{\gamma-\eta}$.
    Then,
    \begin{equation*}
        \frac{1}{\#\cV}\#\left\{ \bv\in\cV : \abs{\cos(\theta_{\bv})} \leq \big(\sqrt{2}+1\big)d^{-\gamma} \right\}
            \ge 1-\sdfrac{1}{d^{1-2\eta}},
    \end{equation*}
    where, for any vertex $\bv\in\cV$, $\theta_{\bv}$ is the angle between the lines going from $\bc$
    to $\ba$ and $\bv$ respectively.
\end{theorem}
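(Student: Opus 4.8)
The plan is to read off the result from Theorem~\ref{TheoremV1} together with the Pythagorean-type identity~\eqref{eqPythagoras}, and to make the heuristic computation for $\abs{\cos(\theta_{\bv})}$ displayed just before the statement fully rigorous. First I would fix $\eta$, $d$, $\gamma$, $N$, and $\ba$ as in the hypotheses, and set $r_{\ba}=\distance_d(\ba,\bc)$, so that by assumption $r_{\ba}\ge d^{\gamma-\eta}>0$ (nonvanishing, so the angle $\theta_{\bv}$ is well defined for every $\bv\in\cV$ with $\bv\neq\bc$, which always holds since $\distance_d(\bv,\bc)=1/2$ by~\eqref{eqDistanceCV}). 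I would then invoke Theorem~\ref{TheoremV1}: outside an exceptional set of vertices of proportion at most $1/d^{1-2\eta}$, one has $\distance_d(\ba,\bv)=\sqrt{B_{\ba,\cV}}+E$ with $\abs{E}\le d^{-\eta}$, and by~\eqref{eqPythagoras} we may rewrite $B_{\ba,\cV}=1/4+r_{\ba}^2$. The whole theorem will then follow by bounding $\abs{\cos(\theta_{\bv})}$ for every $\bv$ in the complement of that exceptional set.

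Next I would carry out the law-of-cosines computation. In the triangle $(\bc,\ba,\bv)$ the two sides meeting at $\bc$ have normalized lengths $\distance_d(\bc,\bv)=1/2$ and $\distance_d(\bc,\ba)=r_{\ba}$, while the opposite side has normalized length $\distance_d(\ba,\bv)$; hence
\begin{equation*}
    \cos(\theta_{\bv}) = \frac{\distance_d^2(\bc,\bv)+\distance_d^2(\bc,\ba)-\distance_d^2(\ba,\bv)}{2\,\distance_d(\bc,\bv)\,\distance_d(\bc,\ba)}
    = \frac{\tfrac14+r_{\ba}^2-\distance_d^2(\ba,\bv)}{r_{\ba}}.
\end{equation*}
(Normalization by the common factor $1/(\sqrt d N)$ in numerator and denominator is harmless.) For $\bv$ outside the exceptional set I substitute $\distance_d^2(\ba,\bv)=\big(\sqrt{1/4+r_{\ba}^2}+E\big)^2=1/4+r_{\ba}^2+2E\sqrt{1/4+r_{\ba}^2}+E^2$, so that the numerator collapses to $-2E\sqrt{1/4+r_{\ba}^2}-E^2$ in absolute value at most $2\abs{E}\sqrt{1/4+r_{\ba}^2}+\abs{E}^2$. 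Using the upper bound $\sqrt{1/4+r_{\ba}^2}\le 1/\sqrt2$ from~\eqref{eqBoundra} and $\abs{E}\le d^{-\eta}\le 1/\sqrt2$ (the latter because $d>2^{1/\eta}$ forces $d^{-\eta}<1/2<1/\sqrt2$, in fact $\abs{E}^2\le d^{-\eta}$ suffices here) gives $\abs{\cos(\theta_{\bv})}\le\big(\tfrac{2}{\sqrt2}+1\big)d^{-\eta}r_{\ba}^{-1}=(\sqrt2+1)d^{-\eta}r_{\ba}^{-1}$. Finally, the hypothesis $r_{\ba}\ge d^{\gamma-\eta}$ yields $d^{-\eta}r_{\ba}^{-1}\le d^{-\eta}d^{\eta-\gamma}=d^{-\gamma}$, hence $\abs{\cos(\theta_{\bv})}\le(\sqrt2+1)d^{-\gamma}$ for every non-exceptional $\bv$; since the exceptional set has proportion at most $1/d^{1-2\eta}$, the stated lower bound on the proportion follows.

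The only genuinely delicate point — and the place I would be most careful — is not an inequality but a consistency check on the range of parameters: one must confirm that the hypotheses are not vacuous, i.e.\ that there actually exists a point $\ba\in\cW$ with $r_{\ba}\ge d^{\gamma-\eta}$. Since $r_{\ba}$ ranges up to $1/2$ (attained at a vertex, and for $N$ even $\bc\in\cW$ so intermediate values are realized; for $N$ odd one argues with a nearby integer point as in the discussion preceding the theorem), one needs $d^{\gamma-\eta}\le 1/2$, equivalently $\gamma\le\eta-\frac{\log 2}{\log d}$, which is exactly the stated constraint $\gamma\in\big(0,\,\eta-\frac{\log 2}{\log d}\big]$; and this interval is nonempty precisely because $d>2^{1/\eta}$ makes $\eta-\frac{\log 2}{\log d}>0$. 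I would state this explicitly so that the theorem is non-trivial. Everything else is the routine law-of-cosines estimate above, with the already-proved Theorem~\ref{TheoremV1} supplying the measure-theoretic content.
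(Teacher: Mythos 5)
Your proposal is correct and follows essentially the same route as the paper: apply Theorem~\ref{TheoremV1} together with the identity~\eqref{eqPythagoras}, run the law-of-cosines estimate with $\distance_d(\bc,\bv)=1/2$ and the bounds $\sqrt{1/4+r_{\ba}^2}\le 1/\sqrt{2}$, $\abs{E}\le d^{-\eta}$ to get $\abs{\cos\theta_{\bv}}\le(\sqrt2+1)d^{-\eta}r_{\ba}^{-1}\le(\sqrt2+1)d^{-\gamma}$, and note that the constraint $\gamma\le\eta-\frac{\log 2}{\log d}$ (with $d>2^{1/\eta}$) is exactly what makes the hypothesis $r_{\ba}\ge d^{\gamma-\eta}\le 1/2$ satisfiable. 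Your explicit consistency check on the parameter range is a nice touch that the paper only mentions informally.
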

In plain words, Theorem~\ref{ThTriangleacv} says that as long as $\ba$ is not too close to 
the center of the cube, almost all triangles formed by $\ba$, $\bc$, and a vertex of the cube
are almost right triangles.

\section{The spacings between a fixed point and the lattice points in the hypercube}\label{SectionAMW}

In this section we first calculate the mean distance from a fixed points to all the lattice
points in $\cW$. Afterwards, we use the result to find the second moment about the mean
of these distances. 
This is the farthest opposite case in terms of the dimension from the problem dealt with before.
Here, the whole hypercube of lattice points plays the previous role of the vertices.

\subsection{\texorpdfstring{The average $A_{\ba,\cW}(d,N)$}{The average A(a,W; d,N}}

Let $\ba=(a_1,\dots,a_d)\in\RR^d$ be fixed and denote
\begin{equation*}
   \begin{split}
   A_{\ba,\cW}(d,N):=\frac{1}{\#\cW}\sum_{\bv\in\cW}  \distance^2 (\ba,\bv)\,.
   \end{split}   
\end{equation*}
Using the definitions and rearranging the terms, we find that
\begin{equation}\label{eqAW1}
   \begin{split}
   A_{\ba,\cW}(d,N)&=\frac{1}{\#\cW}\sum_{\bv\in\cW} \sum_{j=1}^d (v_j-a_j)^2\\
              &=\frac{1}{\#\cW}\sum_{\bv\in\cW}\sum_{j=1}^dv_j^2
              -\frac{2}{\#\cW}  \sum_{\bv\in\cW} \sum_{j=1}^d a_j v_j +\norm{\ba}^2.
   \end{split}   
\end{equation}
Here, changing the order of summation, the sum of the squares is
\begin{equation}\label{eqAW1a}
    \begin{split}
    \sum_{j=1}^d\sum_{\bv\in\cW}v_j^2 = d\sum_{\bv\in\cW}v_1^2
    =d(N+1)^{d-1}\sum_{v=0}^{N}v^2
    =\frac{dN(N+1)^d(2N+1)}{6}.
    \end{split}    
\end{equation}
In the same way, the mixed sum in~\eqref{eqAW1} can be written as
\begin{equation}\label{eqAW1b}
    \begin{split}
    \sum_{j=1}^d\sum_{\bv\in\cW}a_jv_j 
    =\normu{\ba}(N+1)^{d-1}\sum_{v=0}^{N}v
    =\normu{\ba}\frac{N(N+1)^d}{2}.
    \end{split}    
\end{equation}
On inserting the results~\eqref{eqAW1a} and~\eqref{eqAW1b} in~\eqref{eqAW1} we find 
a closed form expression for $A_{\ba,\cW}(d,N)$, which we state in the next lemma.

\begin{lemma}\label{LemmaAW}
Let $d,N \ge 1$ be integers, and let $\ba\in\RR^d$ be fixed. 
Let $\cW$ be the set of lattice points in $[0,N]^d$.
Then, the average of all squares of distances from $\ba$ to points in $\cW$  is
\begin{equation}\label{eqAW2}
   \begin{split}
   A_{\ba,\cW}(d,N)& =\frac{1}{\#\cW}\sum_{\bv\in\cW}  \distance^2 (\ba,\bv)
   =\frac{dN(2N+1)}{6}-\normu{\ba}{N}+\norm{\ba}^2.
   \end{split}   
\end{equation}

\end{lemma}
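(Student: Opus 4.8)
The plan is to compute $A_{\ba,\cW}(d,N)$ directly from its definition, following the same strategy already used for $A_{\ba,\cV}(d,N)$ in Lemma~\ref{LemmaAverageV}. First I would expand the squared Euclidean distance coordinatewise, as in \eqref{eqAW1}, splitting the average into three pieces: the mean of $\sum_j v_j^2$ over $\bv\in\cW$, the mean of $-2\sum_j a_j v_j$, and the constant term $\norm{\ba}^2$. The third term is immediate. For the first two, the key observation is that $\cW$ is a product set, so the sum factors: for each fixed coordinate $j$, the other $d-1$ coordinates range freely over $\{0,1,\dots,N\}$, contributing a factor $(N+1)^{d-1}$, and the $j$-th coordinate contributes $\sum_{v=0}^N v^2$ or $\sum_{v=0}^N v$ respectively.

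Next I would plug in the classical closed forms $\sum_{v=0}^N v = N(N+1)/2$ and $\sum_{v=0}^N v^2 = N(N+1)(2N+1)/6$, which gives \eqref{eqAW1a} and \eqref{eqAW1b}. Dividing by $\#\cW = (N+1)^d$ cancels the bulk of the powers of $(N+1)$, leaving $\frac{dN(2N+1)}{6}$ from the quadratic term and $\normu{\ba} N$ from the linear term. Assembling the three pieces yields \eqref{eqAW2}. A sanity check: taking $\ba$ at the center $\bc=(N/2,\dots,N/2)$, one has $\normu{\ba}=dN/2$ and $\norm{\ba}^2 = dN^2/4$, so $A_{\bc,\cW} = \frac{dN(2N+1)}{6} - \frac{dN^2}{2} + \frac{dN^2}{4} = dN\big(\frac{2N+1}{6} - \frac{N}{4}\big) = dN\cdot\frac{N+2}{12}$, which after normalization by $dN^2$ gives $\frac{1}{12}+\frac{1}{6N}$, matching the value announced for $r_{\ba}=0$ in the introduction and in Theorem~\ref{Theorem3}.

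There is essentially no obstacle here: the argument is a routine product-structure computation with no inequalities or limits involved, and the only thing to be careful about is bookkeeping the powers of $(N+1)$ and the factor of $2$ in the cross term. The genuinely substantial work — bounding the second moment about this mean and converting it into the high-dimensional concentration statement of Theorem~\ref{Theorem3} — comes afterward in Sections~\ref{SectionAMW} and~\ref{SectionSpacingsW}, exactly as the $\cV$-case played out with Lemmas~\ref{LemmaAverageV} and~\ref{LemmaM2V} feeding Theorem~\ref{TheoremV1}.
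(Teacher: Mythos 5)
Your proposal is correct and follows essentially the same route as the paper: expanding the squared distance coordinatewise, exploiting the product structure of $\cW$ to factor out $(N+1)^{d-1}$, and inserting the standard formulas for $\sum_{v=0}^N v$ and $\sum_{v=0}^N v^2$, exactly as in \eqref{eqAW1}--\eqref{eqAW1b}. The sanity check at $\ba=\bc$ recovering $\sdfrac{1}{12}+\sdfrac{1}{6N}$ is a nice touch but not part of the paper's argument.
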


\subsection{The second moment about the mean}

The second moment about the mean for the whole hypercube, denoted by $A_{\ba,\cW}=A_{\ba,\cW}(d,N)$, 
is defined by
\begin{equation*}
    \begin{split}
    \fM_{2; \ba,\cW}(d,N) & := \frac{1}{\#\cW}\sum_{\bv\in\cW}
    \left(\distance^2(\bv,\ba)-A_{\ba,\cW}\right)^2.
    \end{split}
\end{equation*}
Rearranging the terms on the summation, we may rewrite $\fM_{2; \ba,\cW}$ as
\begin{equation}\label{eqM2aW1}
    \begin{split}
    \fM_{2; \ba,\cW}(d,N) &=\ltfrac{1}{(N+1)^d}\sum_{\bv\in\cW}
    \left(\distance^4(\bv,\ba)
          -2\distance^2(\bv,\ba)A_{\ba,\cW}+A_{\ba,\cW}^2\right)\\
    &=\ltfrac{1}{(N+1)^d}\bigg(
         \sum_{\bv\in\cW}\distance^4(\bv,\ba)
    -2A_{\ba,\cW}\sum_{\bv\in\cW}\distance^2(\bv,\ba)
    +\sum_{\bv\in\cW}A_{\ba,\cW}^2\bigg)\\
    &=\ltfrac{1}{(N+1)^d}\cdot\Sigma_{\ba,\cW}-A_{\ba,\cW}^2.
    \end{split}
\end{equation}
Here the terms collected in $\Sigma_{\ba,\cW}$ are
the analogs of that from relation~\eqref{eqM2aV11}, so that their sum is 
\begin{equation}\label{eqM2aW17}
    \begin{split}
    \Sigma_{\ba,\cW} &=\sum_{\bv\in\cW}\distance^4(\bv,\ba)
     = \sum_{\bv\in\cW} \sum_{m=1}^d \sum_{n=1}^d h(v_m,v_n, a_m,a_n),
    \end{split}
\end{equation}
where $h(v_m,v_n, a_m,a_n)=(v_m-a_m)^2(v_n-a_n)^2$ 
is the same sum of nine monomials from~\eqref{eqM2aV}.

Next we calculate the contribution of each of the nine type of terms to the total sum.
In the process, we change the order of summation and take care if the terms are on the 
diagonal (that is, if $m=n$) or not. 
We denote by $T_k(N)$ the sum of the first $N$ $k$-powers of positive integers, 
that is, $T_k(N)=1^k+2^k+\cdots +N^k$.
Thus, we obtain:
\begin{equation}\label{eqM2aW99a}
    \begin{split}
       S_1(\ba,\cW)&=\sum_{\bv\in\cW} \sum_{m=1}^d \sum_{n=1}^d v_m^2v_n^2\\
         &= d(N+1)^{d-1}T_4(N) +(d^2-d)(N+1)^{d-2}T_2^2(N);\\
       S_2(\ba,\cW)&= \sum_{\bv\in\cW} \sum_{m=1}^d \sum_{n=1}^dv_m^2 v_n a_n\\
        &= \normu{\ba}(N+1)^{d-1}T_3(N) + \normu{\ba}(d-1)(N+1)^{d-2}T_1(N)T_2(N);\\
       S_3(\ba,\cW)&=\sum_{\bv\in\cW} \sum_{m=1}^d \sum_{n=1}^d v_m^2a_n^2 
         = \norm{\ba}^2 d (N+1)^{d-1}T_2(N) ;\\
    \end{split}
\end{equation}
then
\begin{equation}\label{eqM2aW99b}
    \begin{split}
       S_4(\ba,\cW)&=S_2(\ba,\cW)= \sum_{\bv\in\cW} \sum_{m=1}^d \sum_{n=1}^dv_m a_m v_n^2 \\
        &= \normu{\ba}(N+1)^{d-1}T_3(N) + \normu{\ba}(d-1)(N+1)^{d-2}T_1(N)T_2(N);\\ 
    S_5(\ba,\cW)&= \sum_{\bv\in\cW} \sum_{m=1}^d \sum_{n=1}^dv_m a_m v_n a_n\\
        &=  \norm{\ba}^2(N+1)^{d-1}T_2(N)+\left(\normu{\ba}^2-\norm{\ba}^2\right)(N+1)^{d-2}T_1^2(N);\\
        S_6(\ba,\cW)&= \sum_{\bv\in\cW} \sum_{m=1}^d \sum_{n=1}^dv_m a_m a_n^2
        =  \normu{\ba}\norm{\ba}^2 (N+1)^{d-1}T_1(N);\\
    \end{split}
\end{equation}
and
\begin{equation}\label{eqM2aW99c}
    \begin{split}
        S_7(\ba,\cW)&= S_3(\ba,\cW)=\sum_{\bv\in\cW} \sum_{m=1}^d \sum_{n=1}^da_m^2 v_n^2
        =  \norm{\ba}^2d(N+1)^{d-1}T_2(N);\\
         S_8(\ba,\cW)&= S_6(\ba,\cW)= \sum_{\bv\in\cW} \sum_{m=1}^d \sum_{n=1}^d a_m^2 v_n a_n
        =  \normu{\ba}\norm{\ba}^2(N+1)^{d-1}T_1(N);\\
         S_9(\ba,\cW)&= \sum_{\bv\in\cW} \sum_{m=1}^d \sum_{n=1}^d a_m^2 a_n^2
        =  \norm{\ba}^4(N+1)^d.\\
    \end{split}
\end{equation}
On combining~\eqref{eqM2aW99a},~\eqref{eqM2aW99b},~\eqref{eqM2aW99c}, and
\eqref{eqM2aV}, we find that $\Sigma_{\ba,\cW}$ from~\eqref{eqM2aW17} equals
\begin{equation}\label{eqsigmaaWTot}
    \begin{split}
    \Sigma_{\ba,\cW} =& 
    \big(S_1(\ba,\cW)-2S_2(\ba,\cW)+S_3(\ba,\cW)\big)\\
    &\phantom{= \big(S_1(\ba,\cW)\;}
    -\big( 2S_4(\ba,\cW) -4S_5(\ba,\cW)+2S_6(\ba,\cW)\big) \\
    &\phantom{=}+ \big(S_7(\ba,\cW)-2S_8(\ba,\cW)+S_9(\ba,\cW)\big)\\
     =& (N+1)^d\biggl( \left(\sdfrac{1}{9}d^2-\sdfrac{4}{45}d \right)N^4
        +\left( \sdfrac{1}{9}d^2 + \sdfrac{17}{90}d + \left( -\sdfrac{2}{3}d - \sdfrac{1}{3} \right)
        \normu{\ba} \right)N^3 \biggr. \\
    &\phantom{(N+1)^d\biggl(} + \left( \sdfrac{1}{36}d^2 + \sdfrac{1}{180}d
        + \left( -\sdfrac{1}{3}d - \sdfrac{2}{3} + \normu{\ba} \right)\normu{\ba}\right.\\
        &\left.\phantom{\hspace*{.53\textwidth}}
        + \left( \sdfrac{2}{3}d + \sdfrac{1}{3} \right)\norm{\ba}^2\right)N^2 \\
    &\phantom{(N+1)^d\biggl(}\,\biggl. + \left( -\sdfrac{1}{30}d + \left(\sdfrac{1}{3}d
        + \sdfrac{2}{3} - 2\normu{\ba} \right)\norm{a}^2\right)N
        + \norm{\ba}^4 \biggr).
    \end{split}
\end{equation}
Finally, inserting the results from \eqref{eqsigmaaWTot} and \eqref{eqAW2} into \eqref{eqM2aW1}, we 
obtain the needed formula for $\fM_{2;\ba,\cW}(d,N)$.
\begin{lemma}\label{LemmaM2W}
Let $d,N \ge 1$ be integers, and let $A_{\ba,\cW}(d,N)$ be the average distance from a fixed point 
$\ba$ to the points in the hypercube $\cW$. 
Then, the second moment about $A_{\ba,\cW}(d,N)$~is
      \begin{equation*}
    \begin{split}
    \fM_{2;\ba,\cW}(d,N) &=\sdfrac{1}{(N+1)^d}\sum_{\bv\in\cW}
    \left(\distance^4(\bv,\ba)
          -2\distance^2(\bv,\ba)A_{\ba,\cW}+A_{\ba,\cW}^2\right)\\
    &= \sdfrac{4}{45}dN^4 + \left(\sdfrac{17}{90}d - \sdfrac{1}{3}\normu{\ba}\right)N^3 \\
        &\phantom{=}+ \left(\sdfrac{1}{180}d - \sdfrac{2}{3}\normu{\ba} + \sdfrac{1}{3}\norm{\ba}^2\right)N^2
            + \left(-\sdfrac{1}{30}d + \sdfrac{2}{3}\norm{a}^2\right)N.
    \end{split}
\end{equation*}

\end{lemma}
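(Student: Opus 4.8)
The plan is to read off $\fM_{2;\ba,\cW}(d,N)$ from the identity
$\fM_{2;\ba,\cW}(d,N)=\frac{1}{(N+1)^d}\Sigma_{\ba,\cW}-A_{\ba,\cW}^2$ already recorded in \eqref{eqM2aW1}, so that all that remains is to (i) justify the closed form \eqref{eqsigmaaWTot} for $\Sigma_{\ba,\cW}$ and (ii) carry out the subtraction, using Lemma~\ref{LemmaAW} for $A_{\ba,\cW}(d,N)$.

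For step (i), I would first record the standard power-sum formulas for $T_k(N)=1^k+2^k+\cdots+N^k$, namely $T_1(N)=\tfrac{N(N+1)}{2}$, $T_2(N)=\tfrac{N(N+1)(2N+1)}{6}$, $T_3(N)=T_1(N)^2$, and $T_4(N)=\tfrac{N(N+1)(2N+1)(3N^2+3N-1)}{30}$. Then I would verify each of the nine sums $S_1(\ba,\cW),\dots,S_9(\ba,\cW)$ in \eqref{eqM2aW99a}--\eqref{eqM2aW99c}: for each one, swap the order of the summations over $\bv\in\cW$ and over $(m,n)\in\{1,\dots,d\}^2$, split off the $d$ diagonal indices $m=n$ from the $d^2-d$ off-diagonal ones, and exploit the product structure $\cW=\{0,1,\dots,N\}^d$. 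Concretely, a monomial in $v_m,v_n$ with $m\neq n$ sums over $\cW$ to $(N+1)^{d-2}$ times the product of the two relevant one-variable power sums, while a diagonal monomial $v_m^{s}$ sums to $(N+1)^{d-1}T_s(N)$; the factors in $a_m,a_n$ pull out of the $\cW$-sum unchanged and recombine into $\normu{\ba}$, $\norm{\ba}^2$, $\normu{\ba}^2$, $\normu{\ba}\norm{\ba}^2$, $\norm{\ba}^4$. Substituting the $T_k$-formulas, expanding in powers of $N$, and adding the nine pieces with the signs dictated by the monomial expansion \eqref{eqM2aV} yields \eqref{eqsigmaaWTot}.

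For step (ii), I would expand $A_{\ba,\cW}(d,N)^2=\bigl(\tfrac{dN(2N+1)}{6}-\normu{\ba}N+\norm{\ba}^2\bigr)^2$ from \eqref{eqAW2}, cancel the $(N+1)^d$ against $\#\cW$ in the first term of \eqref{eqM2aW1}, and subtract. The key point is that almost everything cancels: the $d^2N^4$, $d^2N^3$, and $d^2N^2$ terms disappear, and so do the $\normu{\ba}^2N^2$ term and the $\norm{\ba}^4$ term, leaving an expression that is \emph{affine} in $\normu{\ba}$ and in $\norm{\ba}^2$. Matching the coefficients of $N^4$, $N^3$, $N^2$, $N$ then gives the stated formula.

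The only genuine obstacle is the bookkeeping: one must track, simultaneously, the powers of $N$, the powers of $d$, and the dependence on $\normu{\ba}$, $\norm{\ba}^2$, $\norm{\ba}^4$ through \eqref{eqsigmaaWTot} and the subtraction without an arithmetic or sign slip, which is precisely why the $d^2$ and $\norm{\ba}^4$ cancellations need to be checked rather than assumed. Two sanity checks keep this honest. First, at $\ba=\bc$ the claimed value must equal $d$ times the one-coordinate variance of $(v-\tfrac N2)^2$ for $v$ uniform on $\{0,\dots,N\}$, by independence of coordinates. Second, at $N=1$ we have $\cW(d,1)=\cV(d,1)$, so the formula must collapse to $\tfrac14 d-\normu{\ba}+\norm{\ba}^2$, which is exactly Lemma~\ref{LemmaM2V} at $N=1$; indeed the $d$-coefficient is $\tfrac{4}{45}+\tfrac{17}{90}+\tfrac{1}{180}-\tfrac{1}{30}=\tfrac14$, the $\normu{\ba}$-coefficient is $-\tfrac13-\tfrac23=-1$, and the $\norm{\ba}^2$-coefficient is $\tfrac13+\tfrac23=1$.
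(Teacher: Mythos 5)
Your proposal is correct and follows essentially the same route as the paper: the identity $\fM_{2;\ba,\cW}=\frac{1}{(N+1)^d}\Sigma_{\ba,\cW}-A_{\ba,\cW}^2$, evaluation of the nine sums $S_1,\dots,S_9$ via the diagonal/off-diagonal split and the power sums $T_k(N)$, and then the subtraction using Lemma~\ref{LemmaAW}. The added sanity checks (the $N=1$ collapse to Lemma~\ref{LemmaM2V} and the coordinatewise-variance check at $\ba=\bc$) are a nice touch but do not change the argument.
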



\section{\texorpdfstring{The chance to find points in $\cW$ that are at some uncommon spacing from each other}{The chance to find points in W that are at some uncommon spacing from each other}}\label{SectionSpacingsW}
The formulas for the average and the second moment obtained in Section~\ref{SectionAMW}
allows us to estimate the chance to find points in $\cW$ that are situated at uncommon 
(away from the average) spacing from each other. 
It turns out that, as the dimension~$d$ gets larger, the probability to select at random two points
from $\cW$ that are closer or farther away from the average is smaller and smaller, reducing to zero as 
$d$ tends to infinity.

Following the same argument used in the proof of Theorem~\ref{TheoremV1}, we obtain the 
following result, which shows that for any fixed $\ba\in\RR^d$, almost
all normalized distances from $\ba$ to the points in $\cW$ are close to $\sqrt{A_{\ba,\cW}/dN^2}$.

\begin{theorem}\label{TheoremW1}
Let $\eta\in (0,1/2)$ be fixed. 
Let $B_{\ba,\cW}=A_{\ba,\cW}/dN^2$ denote the normalized average of the square distance from 
$\ba$ to points in $\cW$.
Then, for any integers $d\ge 2$, $N\ge 1$, and any point $\ba\in\RR^d$, we have 
\begin{equation*}
       \sdfrac{1}{\#\cW} \#\left\{\bv\in \cW :
        \distance_d(\ba,\bv)\in \left[\sqrt{B_{\ba,\cW}}-\sdfrac{1}{d^\eta}, 
        \sqrt{B_{\ba,\cW}}+\sdfrac{1}{d^{\eta}} \right]
\right\}
\ge 1- \sdfrac{51}{15}\sdfrac{1}{d^{1-2\eta}}.
\end{equation*}
\end{theorem}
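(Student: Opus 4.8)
The plan is to mimic the proof of Theorem~\ref{TheoremV1} verbatim, replacing $\cV$ by $\cW$ and Lemmas~\ref{LemmaAverageV},~\ref{LemmaM2V} by Lemmas~\ref{LemmaAW},~\ref{LemmaM2W}. First I would establish an upper bound of the form $\fM_{2;\ba,\cW}(d,N)/(d^2N^4) \le c/d$ for an absolute constant $c$. From Lemma~\ref{LemmaM2W} one divides through by $d^2N^4$; the leading term is $\tfrac{4}{45}dN^4/(d^2N^4)=\tfrac{4}{45d}$, and the remaining terms carry either a $1/d$ or a $1/N$ (or both) and involve $\normu{\ba}$ and $\norm{\ba}^2$. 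Since $\ba\in\cW$ means $0\le a_j\le N$, we have $0\le\normu{\ba}\le dN$ and $0\le\norm{\ba}^2\le dN^2$, so each stray term is $O(1/d)$ after normalization; collecting constants should give something like $\fM_{2;\ba,\cW}/(d^2N^4)\le \tfrac{17}{15d}$ (whatever constant makes the final $51/15$ come out — note $51/15 = 3\cdot 17/15$, matching the $b=2$ squaring step below). The one subtlety is sign: unlike the $\cV$ case where $-\normu{\ba}N+\norm{\ba}^2\le 0$ killed the lower-order terms cleanly, here the $\normu{\ba}$ terms come with mixed signs, so I would bound $|\,\text{everything}\,|$ by its worst case using $\normu{\ba}\le dN$, $\norm{\ba}^2\le dN^2$ and $N\ge 1$ rather than hoping for a cancellation.

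Next, exactly as in~\eqref{eqM2VerticesLowerBound}, for parameters $b,T>0$ I would bound $\fM_{2;\ba,\cW}/(d^2N^4)$ below by $\tfrac{1}{b^2T^2}$ times the proportion of $\bv\in\cW$ with $|\distance_d^2(\ba,\bv)-B_{\ba,\cW}|\ge \tfrac{1}{bT}$, yielding the Chebyshev-type inequality
\begin{equation*}
    \sdfrac{1}{\#\cW}\#\left\{\bv\in\cW:\abs{\distance_d^2(\ba,\bv)-B_{\ba,\cW}}\ge\sdfrac{1}{bT}\right\}\le \sdfrac{c\,b^2T^2}{d}.
\end{equation*}
Then I need the analogue of~\eqref{eqLowerBoundBV}, namely $\sqrt{B_{\ba,\cW}}\ge 1/2$. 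From Lemma~\ref{LemmaAW}, $B_{\ba,\cW}=A_{\ba,\cW}/(dN^2)=\tfrac{2N+1}{6N}+\tfrac{1}{dN^2}(\norm{\ba}^2-N\normu{\ba})$; here $\tfrac{2N+1}{6N}\ge\tfrac13$, and the bracketed term, minimized coordinatewise at $a_j=N/2$ as in the remark, is $\ge -dN^2/4$, so $B_{\ba,\cW}\ge \tfrac13-\tfrac14=\tfrac1{12}$. That only gives $\sqrt{B_{\ba,\cW}}\ge 1/\sqrt{12}$, not $1/2$ — so the factoring step $|\distance_d^2-B|=|\distance_d-\sqrt{B}|(\distance_d+\sqrt B)\ge \sqrt{B_{\ba,\cW}}\,|\distance_d-\sqrt{B_{\ba,\cW}}|$ would force me to take $b=1/\sqrt{B_{\ba,\cW}}\le \sqrt{12}$, i.e. $b^2\le 12$. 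Actually $B_{\ba,\cW}$ can be made a bit larger if one uses $\tfrac{2N+1}{6N}\ge\tfrac13$ tightly, but $1/12$ is the safe universal lower bound; using $b^2=12$ (rather than the $b^2=4$ of the $\cV$ case) is presumably where the extra factor $3$ in $51/15$ versus $17/15$ enters.

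Finally, with $b$ fixed by $b^2 = 1/B_{\ba,\cW}$ (bounded by $12$), the event $|\distance_d^2(\ba,\bv)-B_{\ba,\cW}|\ge \tfrac{1}{bT}$ contains the event $|\distance_d(\ba,\bv)-\sqrt{B_{\ba,\cW}}|\ge \tfrac1T$, so the proportion of $\bv$ with $|\distance_d(\ba,\bv)-\sqrt{B_{\ba,\cW}}|\ge 1/T$ is at most $c b^2 T^2/d$; choosing $T=d^\eta$ gives the bad proportion $\le c b^2 /d^{1-2\eta}$ and hence the stated lower bound $1-\tfrac{51}{15}\cdot\tfrac{1}{d^{1-2\eta}}$ once the constants $c$ and $b^2$ are pinned down to make $c b^2 = 51/15$. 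The main obstacle is purely bookkeeping: getting the constant in the second-moment bound small enough and tracking it through the $b=1/\sqrt{B_{\ba,\cW}}$ substitution so that the product lands at exactly $51/15$; there is no conceptual difficulty beyond what Theorem~\ref{TheoremV1} already required, but the arithmetic with the many lower-order terms of Lemma~\ref{LemmaM2W} is more delicate than in the clean $\cV$ case, and one must be careful that the bound holds for \emph{all} $N\ge 1$ (the worst case being small $N$, where the $1/N$ terms are largest but are still dominated by the $d^2N^4$ denominator).
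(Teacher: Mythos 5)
Your overall architecture is exactly the one the paper intends (it only says ``following the same argument used in the proof of Theorem~\ref{TheoremV1}''): Chebyshev on the second moment of Lemma~\ref{LemmaM2W}, a uniform lower bound for $\sqrt{B_{\ba,\cW}}$ to pass from squared distances to distances, and $T=d^\eta$. Your lower bound $B_{\ba,\cW}\ge\tfrac{1}{12}$ (in fact $\ge\tfrac1{12}+\tfrac1{6N}$), hence $b^2\le 12$, is also the right ingredient. The genuine gap is in the second-moment bookkeeping, where you explicitly decide to bound every $\ba$-dependent term by its worst case ``rather than hoping for a cancellation.'' That choice kills the constant: using $\normu{\ba}\le dN$ and $\norm{\ba}^2\le dN^2$ termwise, the four $\ba$-terms of $\fM_{2;\ba,\cW}/(d^2N^4)$ contribute up to $\tfrac13+\tfrac23+\tfrac13+\tfrac23=2$ times $1/d$, so you would only get a bound of roughly $\tfrac{411}{180d}$, and with $b^2=12$ a final constant near $\tfrac{411}{15}\approx 27$, not $\tfrac{51}{15}=3.4$. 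The cancellation you renounce is precisely the same elementary one as in the $\cV$ case: for $\ba\in\cW$ one has $a_j^2\le Na_j$, hence $\norm{\ba}^2-N\normu{\ba}\le 0$, and the $\ba$-terms of Lemma~\ref{LemmaM2W} group exactly into multiples of this quantity, namely
\begin{equation*}
\sdfrac{\fM_{2;\ba,\cW}(d,N)}{d^2N^4}
= \sdfrac{4}{45d}+\sdfrac{17}{90dN}+\sdfrac{1}{180dN^2}-\sdfrac{1}{30dN^3}
+\left(\sdfrac{1}{3d^2N^2}+\sdfrac{2}{3d^2N^3}\right)\big(\norm{\ba}^2-N\normu{\ba}\big)
\le \sdfrac{51}{180d}
\end{equation*}
for $N\ge1$. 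With this, the factorization $\abs{\distance_d^2-B_{\ba,\cW}}\ge\sqrt{B_{\ba,\cW}}\,\abs{\distance_d-\sqrt{B_{\ba,\cW}}}\ge\tfrac{1}{2\sqrt3}\abs{\distance_d-\sqrt{B_{\ba,\cW}}}$ and $T=d^\eta$ give exactly $\tfrac{51}{180}\cdot 12=\tfrac{51}{15}$, which is how the stated constant arises; without the grouping your proposal proves only a weaker inequality.

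Two smaller points. Your parenthetical ``$51/15=3\cdot 17/15$, matching the $b=2$ squaring step'' is inconsistent with your own (correct) later choice $b^2=12$; the actual decomposition is $51/15=(51/180)\cdot 12$, so the intermediate second-moment constant is $17/60$, not $17/15$. Also note that the grouping step, and hence the whole argument, uses $0\le a_j\le N$; you correctly work with $\ba\in\cW$, and this is what the paper's statement should be read as (as in Theorems~\ref{TheoremV1} and~\ref{Theorem3}), since for arbitrary $\ba\in\RR^d$ the normalized second moment is not $O(1/d)$ uniformly and the constant would need a different, coupled treatment of $\fM_{2;\ba,\cW}$ and $B_{\ba,\cW}$.
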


We can continue our quest by looking for triplets of points in $\cW$.
In the same way as we proceeded in Section~\ref{SectionVTriangles}, we see that almost all pairs 
of distinct points \mbox{$(\bvi,\bvii)\in\cW^2$} have components situated at a distance close to 
$\sqrt{B_{\ba,\cW}}$ from~$\ba$. This means that almost all triangles formed by $\ba$ 
and two other points in the cube are `almost isosceles'. 
We can make the argument explicit, as we did in Theorem~\ref{ThVIsosceles}  for vertices, 
to find the following analogous result.

\begin{theorem}\label{ThWIsosceles}
Let $\cT_{\ba,\cW^2}\subset\cW^2$ be the set of all pairs of integer points $(\bvi,\bvii)$ which form a
non-degenerate triangle together with $\ba$. Let $\eta\in(0,1/2)$ be fixed. Then, for any integers
$d\geq2$, $N\geq1$, and any point $\ba\in\cW$, we have
\begin{equation*}
    \sdfrac{1}{\#\cT_{\ba,\cW^2}} \#\left\{ 
        (\bvi,\bvii)\in\cT_{\ba,\cW^2} \colon \abs{\distance_d(\ba,\bvi)-\distance_d(\ba,\bvii)}\leq\sdfrac{2}{d^{\eta}} \right\}
        \geq 1-\sdfrac{102}{15}\sdfrac{1}{d^{1-2\eta}}.
\end{equation*}
\end{theorem}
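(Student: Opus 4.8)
The plan is to mirror the argument used for the vertex case (Theorem~\ref{ThVIsosceles}), feeding in Theorem~\ref{TheoremW1} in place of Theorem~\ref{TheoremV1}. First I would fix $\eta\in(0,1/2)$, integers $d\ge2$, $N\ge1$, and a point $\ba\in\cW$, and abbreviate $\delta:=1/d^{\eta}$ together with the interval $I:=\bigl[\sqrt{B_{\ba,\cW}}-\delta,\ \sqrt{B_{\ba,\cW}}+\delta\bigr]$. The key observation is the triangle-type inequality: if $\distance_d(\ba,\bvi)\in I$ and $\distance_d(\ba,\bvii)\in I$, then $\abs{\distance_d(\ba,\bvi)-\distance_d(\ba,\bvii)}\le 2\delta=2/d^{\eta}$, which is exactly the condition counted in the statement. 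So it suffices to bound below the proportion of pairs $(\bvi,\bvii)\in\cT_{\ba,\cW^2}$ for which both distances land in $I$.

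Next I would pass from the single-point count in Theorem~\ref{TheoremW1} to a pair count, exactly as in the estimate labelled~\eqref{eqv1v2}. Writing $G:=\{\bv\in\cW:\distance_d(\ba,\bv)\in I\}$, Theorem~\ref{TheoremW1} gives $\#G\ge \#\cW\bigl(1-\tfrac{51}{15}d^{2\eta-1}\bigr)$. The number of ordered pairs of distinct points from $G$ is at least $\#G(\#G-1)$, and $\#\cT_{\ba,\cW^2}\le(\#\cW)^2$, so the desired proportion is at least
\begin{equation*}
\frac{\#G(\#G-1)}{(\#\cW)^2}
\ge\Bigl(1-\tfrac{51}{15}d^{2\eta-1}-\tfrac{1}{\#\cW}\Bigr)\Bigl(1-\tfrac{51}{15}d^{2\eta-1}-\tfrac{2}{\#\cW}\Bigr),
\end{equation*}
where the $1/\#\cW$ and $2/\#\cW$ terms absorb the degenerate configurations (as in~\eqref{eqv1v2}, where $\ba,\bvi,\bvii$ must be forced distinct). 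Since $\#\cW=(N+1)^d\ge 2^d$, these reciprocal terms are dominated by $d^{2\eta-1}$ for $d$ in the stated range, and expanding the product and discarding the positive cross term leaves a lower bound of the form $1-2\cdot\tfrac{51}{15}d^{2\eta-1}-O(2^{-d})=1-\tfrac{102}{15}d^{2\eta-1}$, after checking that the $O(2^{-d})$ slack is nonnegative once combined with the leftover $d^{4\eta-2}$ term (valid for all $d\ge2$, which is why no larger lower bound on $d$ is needed).

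The main obstacle — such as it is — is bookkeeping rather than mathematics: one must verify that the constant $102/15$ comes out cleanly after subtracting the two copies of $51/15$ and that the error terms involving $1/\#\cW=1/(N+1)^d$ and the square term $d^{4\eta-2}$ genuinely do not spoil the bound for every $d\ge2$ and $N\ge1$, in contrast to the vertex case which required $d\ge8$. I expect this works here because the leading constant $51/15>3$ is larger, giving more room; concretely, one checks $d^{4\eta-2}+ (\text{small }2^{-d}\text{ terms})\ge 0$ and that the linear-in-$d^{2\eta-1}$ terms coming from the $1/\#\cW$ corrections are bounded by a negligible multiple of $d^{2\eta-1}$ using $(N+1)^d\ge 2^d\ge d$ for $d\ge2$. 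Assembling these elementary inequalities yields the stated bound, completing the proof of Theorem~\ref{ThWIsosceles}.
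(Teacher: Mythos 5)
Your proposal is correct and follows essentially the same route as the paper, which itself only sketches this result by saying it is the analogue of Theorem~\ref{ThVIsosceles}: apply Theorem~\ref{TheoremW1}, pass to pairs as in~\eqref{eqv1v2} with the $1/\#\cW$ and $2/\#\cW$ corrections for degenerate configurations, and absorb the error terms using the square term and $(N+1)^d\ge 2^d$. Your final bookkeeping, though stated tentatively, does check out: since $\tfrac{51}{15}d^{2\eta-1}\ge \tfrac{51}{15d}$, the square term already dominates $3/(N+1)^d\le 3/2^d$ for all $d\ge2$, which is exactly why no condition like $d\ge8$ is needed here.
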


The sides of these triangles can be found explicitly.
To see this, we first use Lemma~\ref{LemmaAW} to express the normalized average solely 
on the distance from the center of the cube to $\ba$. Thus, using ~\eqref{eqDistanceCba}, we have
\begin{equation*}\label{eqWAverageSquares}
\begin{split}
    B_{\ba,\cW} &= \sdfrac{1}{3}+\sdfrac{1}{6N} - \frac{\normu{\ba}}{dN}+\frac{\norm{\ba}^2}{dN^2} \\
    &= \sdfrac{1}{12}+\sdfrac{1}{6N}+\left(\sdfrac{1}{4} - \frac{\normu{\ba}}{dN}
    +\frac{\norm{\ba}^2}{dN^2}\right) \\
    &= \sdfrac{1}{12}+\sdfrac{1}{6N} + \distance_d^2(\ba,\bc).
\end{split}
\end{equation*}
Here, employing Theorem~\ref{TheoremW1}, the first thing of which we make note, is that
for almost all points $\bv\in\cW$, the square distance $\distance_d^2(\bc,\bv)$ 
is close to $1/12+1/(6N)$.
It also follows that, for almost all pairs of points
$\bvi,\bvii\in\cW$, their mutual distance $\distance_d(\bvi,\bvii)$ 
is close to $\sqrt{B_{\bvi,\cW}}$, which is itself
close to $\sqrt{1/6+1/(3N)}$.
Therefore, with our earlier notation $r_{\ba}:=\distance_d(\ba,\bc)$, we find that almost all triangles $(\ba,\bvi,\bvii)$ have side lengths close to $\sqrt{1/6+1/(3N)}$,
$\sqrt{1/12+1/(6N)+r_{\ba}^2}$, and $\sqrt{1/12+1/(6N)+r_{\ba}^2}$.

If $r_{\ba}=0$, which occurs when $\ba$ is the center of the cube, we see that 
almost all triangles $(\ba,\bvi,\bvii)$
are almost right triangles. On the other hand, if $r_{\ba}$ is close to 
$\sqrt{1/12+1/(6N)}$, then almost all triangles
$(\ba,\bvi,\bvii)$ are almost equilateral.

In order to make this remarks explicit, we first use the analogue 
of~\eqref{eqThmV1SquaresBound} in the proof of Theorem~\ref{TheoremW1} to see that
\begin{equation*}
    \sdfrac{1}{\#\cW}  \#\left\{ \bvi\in\cW :
        \abs{\distance_d^2(\bc,\bvi)-\left( \sdfrac{1}{12}+\sdfrac{1}{6N} \right)} \geq \sdfrac{1}{2\sqrt{6}d^{\eta}} \right\}
        \leq \sdfrac{102}{15}\sdfrac{1}{d^{1-2\eta}}.
\end{equation*}
Furthermore, if $\bvi$ is a fixed point such that
\begin{equation*}
    \distance_d^2(\bc,\bvi)\in\left[ \sdfrac{1}{12}+\sdfrac{1}{6N}-\sdfrac{1}{2\sqrt{6}d^{\eta}},\
        \sdfrac{1}{12}+\sdfrac{1}{6N}+\sdfrac{1}{2\sqrt{6}d^{\eta}} \right],
\end{equation*}
then,
\begin{equation*}
\begin{split}
    &\phantom{\leq}\sdfrac{1}{\#\cW} \#\left\{ \bvii\in\cW :
        \abs{\distance_d(\bvi,\bvii)-\sqrt{\sdfrac{1}{6}+\sdfrac{1}{3N}}} \geq \sdfrac{1}{d^{\eta}} \right\} \\
    &\leq \sdfrac{1}{\#\cW} \#\left\{ \bvii\in\cW :
        \abs{\distance_d^2(\bvi,\bvii)-\left( \sdfrac{1}{6}+\sdfrac{1}{3N} \right)} \geq \sdfrac{1}{\sqrt{6}d^{\eta}} \right\} \\
    &\leq \sdfrac{1}{\#\cW} \#\left\{ \bvii\in\cW : 
        \abs{\distance_d^2(\bvi,\bvii)-B_{\bvi,\cW}} \geq \sdfrac{1}{2\sqrt{6}d^{\eta}} \right\} \\
    &\leq \sdfrac{102}{15}\cdot\sdfrac{1}{d^{1-2\eta}}.
\end{split}
\end{equation*}
Then, we can argue just as we did in the proof of 
Theorem~\ref{ThVSimilarTriangles} to find the proportion of non-degenerate triangles
$(\ba,\bvi, \bvii)$ such that
\begin{equation}\label{eqCommonTriangleConditionsW}
\begin{split}
    \distance_d(\ba,\bvi), \distance_d(\ba,\bvii)  
    &\in \left[ \sqrt{B_{\ba,\cW}}-\sdfrac{1}{d^\eta},\ \sqrt{B_{\ba,\cW}}+\sdfrac{1}{d^\eta} \right], 
    \text{ and }\\
    \distance_d(\bvi,\bvii) &\in \left[ \sqrt{\sdfrac{1}{6}+\sdfrac{1}{3N}}-\sdfrac{1}{d^\eta},\ \sqrt{\sdfrac{1}{6}+\sdfrac{1}{3N}}+\sdfrac{1}{d^\eta} \right],
\end{split}
\end{equation}
and arrive at the following result.

\begin{theorem}\label{ThWSimilarTriangles}
Let $\cT_{\ba,\cW^2}\subset\cW^2$ be the set of all pairs of integer points $(\bvi,\bvii)$, which
together with $\ba$, form a non-degenerate triangle.
 Fix $\eta\in(0,1/2)$. Then, for any integers $d\geq2$, $N\geq1$, and any point $\ba\in\cW$, we have
    \begin{equation*}
        \sdfrac{1}{\#\cT_{\ba,\cW^2}} \#\left\{ (\bvi,\bvii)\in\cT_{\ba,\cW^2} :
            \ba, \bvi, \bvii \text{ satisfy } \eqref{eqCommonTriangleConditionsW} \right\}
            \geq 1- \sdfrac{102}{5}\sdfrac{1}{d^{1-2\eta}}.
    \end{equation*}
\end{theorem}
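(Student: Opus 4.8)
The plan is to mimic precisely the argument that led from Theorem~\ref{TheoremV1} to Theorem~\ref{ThVSimilarTriangles}, replacing the set of vertices $\cV$ with the full lattice $\cW$ and the corresponding bounds from Theorem~\ref{TheoremV1} with those from Theorem~\ref{TheoremW1}. Fix $\eta\in(0,1/2)$, integers $d\ge 2$, $N\ge 1$, and $\ba\in\cW$. The two facts we need as input are: (i) by Theorem~\ref{TheoremW1}, the proportion of $\bvi\in\cW$ with $\distance_d(\ba,\bvi)\notin[\sqrt{B_{\ba,\cW}}-d^{-\eta},\sqrt{B_{\ba,\cW}}+d^{-\eta}]$ is at most $\tfrac{51}{15}d^{-(1-2\eta)}$; and (ii) the computation carried out in the displayed inequalities just above the statement, showing that if $\bvi$ satisfies $\distance_d^2(\bc,\bvi)\in[\tfrac1{12}+\tfrac1{6N}\pm\tfrac1{2\sqrt6 d^\eta}]$, then the proportion of $\bvii\in\cW$ with $\distance_d(\bvi,\bvii)\notin[\sqrt{1/6+1/(3N)}-d^{-\eta},\sqrt{1/6+1/(3N)}+d^{-\eta}]$ is at most $\tfrac{102}{15}d^{-(1-2\eta)}$. (Strictly speaking (ii) requires $\distance_d(\bvi,\cdot)$ to play the role of $\ba$ in Theorem~\ref{TheoremW1}, which is legitimate since that theorem allows \emph{any} point in $\cW$, and it also uses $B_{\bvi,\cW}\ge \tfrac16+\tfrac1{3N}$ together with the lower bound $\sqrt{B_{\bvi,\cW}}\ge 1/2$ exactly as in~\eqref{eqLowerBoundBV} to convert the squared-distance bound into the linear one; I would flag that the constant $\tfrac1{2\sqrt6}$ absorbs this conversion.)

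Next I would choose a reference point. As in the vertex case, fix any $\bvi^{\,*}\in\cW$ that lies in the $\distance_d^2(\bc,\cdot)$-good interval $[\tfrac1{12}+\tfrac1{6N}\pm\tfrac1{2\sqrt6 d^\eta}]$; such a point exists because, by the analogue of~\eqref{eqThmV1SquaresBound} in the proof of Theorem~\ref{TheoremW1}, the bad set has proportion at most $\tfrac{102}{15}d^{-(1-2\eta)}<1$ in the relevant range, so the good set is nonempty. Then for \emph{this} $\bvi^{\,*}$, the number of $\bvii\in\cW$ failing $\distance_d(\bvi^{\,*},\bvii)\in[\sqrt{1/6+1/(3N)}\pm d^{-\eta}]$ is at most $\tfrac{102}{15}d^{-(1-2\eta)}\#\cW$. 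Combining with (i) applied to each of $\bvi,\bvii$, a union bound gives that the proportion of $\bvii\in\cW$ which simultaneously satisfy $\distance_d(\ba,\bvii)\in[\sqrt{B_{\ba,\cW}}\pm d^{-\eta}]$ and $\distance_d(\bvi^{\,*},\bvii)\in[\sqrt{1/6+1/(3N)}\pm d^{-\eta}]$ is at least $1-\tfrac{51}{15}d^{-(1-2\eta)}-\tfrac{102}{15}d^{-(1-2\eta)} = 1-\tfrac{153}{15}d^{-(1-2\eta)}$.

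Finally I would assemble the two-variable count exactly as in~\eqref{eqv1v2} and the display preceding Theorem~\ref{ThVSimilarTriangles}: the proportion of pairs $(\bvi,\bvii)\in\cW^2$ satisfying \eqref{eqCommonTriangleConditionsW} is bounded below by the product of (a) the proportion of $\bvi\in\cW$ with $\distance_d(\ba,\bvi)\in[\sqrt{B_{\ba,\cW}}\pm d^{-\eta}]$ \emph{and} $\distance_d(\bc,\bvi)^2$ in its good interval --- which by (i) together with the $\distance_d^2(\bc,\cdot)$-bound is at least $1-\tfrac{51}{15}d^{-(1-2\eta)}-\tfrac{102}{15}d^{-(1-2\eta)}$ --- times (b) for each such $\bvi$, the proportion of valid $\bvii$, which by the previous paragraph is at least $1-\tfrac{153}{15}d^{-(1-2\eta)}$; then one subtracts $O(1/\#\cW)$ terms to discard degenerate triangles and passes from $\cW^2$ to $\cT_{\ba,\cW^2}$. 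Expanding the product $(1-\tfrac{153}{15}d^{-(1-2\eta)})^2$ and discarding the positive square term, crudely bounding the $1/\#\cW$ corrections (which are exponentially small and for $d\ge 2$, $N\ge 1$ are dominated), yields a lower bound of the form $1-\tfrac{306}{15}d^{-(1-2\eta)}=1-\tfrac{102}{5}d^{-(1-2\eta)}$, which is the claimed bound. The main obstacle is purely bookkeeping: tracking the constants through the squared-to-linear conversion (the $\tfrac1{2\sqrt6}$ and $\tfrac1{\sqrt6}$ factors) and through the union and product bounds so that they collapse exactly to $\tfrac{102}{5}$; there is no conceptual difficulty beyond what was already done for the vertex case, since Theorem~\ref{TheoremW1} is the only new ingredient and it is assumed.
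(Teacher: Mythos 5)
Your proposal is correct and follows essentially the same route as the paper: Theorem~\ref{TheoremW1} plus the identity $B_{\ba,\cW}=\tfrac1{12}+\tfrac1{6N}+\distance_d^2(\ba,\bc)$, the concentration of $\distance_d^2(\bc,\bvi)$, the uniform inner bound for center-good $\bvi$, and the same product/union assembly as in the vertex case, with the constants $\tfrac{51}{15}+\tfrac{102}{15}=\tfrac{153}{15}$ per coordinate collapsing to $2\cdot\tfrac{153}{15}=\tfrac{102}{5}$. Two cosmetic points that do not affect the argument: the paragraph fixing a reference point $\bvi^{*}$ is an unnecessary detour, since what you actually use (and what the paper uses) is the bound of fact (ii) applied uniformly to every center-good $\bvi$ in the outer count; and in your parenthetical the squared-to-linear conversion rests on $\sqrt{\tfrac16+\tfrac1{3N}}\ge \tfrac1{\sqrt6}$ (not on $\sqrt{B_{\bvi,\cW}}\ge\tfrac12$, which is false in general), exactly as the factor $\tfrac1{2\sqrt6}$ in the paper's display encodes.
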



\end{document}